\newtheorem{theorem}{Theorem}
\newtheorem{proposition}[theorem]{Proposition}
\theoremstyle{definition}
\newtheorem{example}[theorem]{Example}
\newtheorem*{example*}{Example}
\newtheorem{remark}[theorem]{Remark}
\newcommand\qbin[3]{\left[\begin{matrix} #1 \\ #2 \end{matrix} \right]_{#3}}
\newcommand\multichoose[2]{\left(\!\!\! \left( \begin{matrix} #1 \\ #2 \end{matrix} 
\right)\!\!\! \right)}
\newcommand\bin[2]{\left( \begin{matrix} #1 \\ #2 \end{matrix} 
\right)}
\newcommand\Sym{\operatorname{Sym}}
\newcommand\inv{\operatorname{inv}}
\newcommand\maj{\operatorname{maj}}
\newcommand\Gal{\operatorname{Gal}}
\newcommand\reg{\operatorname{reg}}
\newcommand\one{\mathbf{1}}
\newcommand\Symm{\mathfrak{S}}
\def\<{{\langle}}
\def\>{{\rangle}}
\newcommand\xx{{\mathbf{x}}}
\newcommand\yy{{\mathbf{y}}}
\newcommand\uu{{\mathbf{u}}}
\newcommand\dd{{\mathbf{d}}}
\newcommand\cc{{\mathbf{c}}}
\newcommand\CCC{{\mathbf{C}}}
\newcommand\stat{{\mathbf{s}}}
\newcommand\ZZ{{\mathbb Z}}
\newcommand\FF{{\mathbb F}}
\newcommand\QQ{{\mathbb Q}}
\newcommand\NN{{\mathbb N}}
\newcommand\CC{{\mathbb C}}
\title{Constructions for cyclic sieving phenomena}
\author{Andrew Berget}
\email{berget@math.ucdavis.edu}
\address{Mathematical Sciences Building\\
One Shields Ave.\\
University of California\\
Davis, CA 95616\\
USA}
\author{Sen-Peng Eu}
\email{speu@nuk.edu.tw}
\address{
No. 700 National University of Kaoshiung Rd.\\ Nanyang District, Kaoshiung 811\\Taiwan
}
\author{Victor Reiner}
\email{reiner@math.umn.edu}
\address{School of Mathematics\\
University of  Minnesota\\
Minneapolis, MN 55455\\ USA}
\date{\today}
\thanks{A. Berget is partially supported by NSF
  grant DMS-0636297. S.-P. Eu is partially supported by the National
  Science Council of Taiwan under grant NSC
  98-2115-M-390-002-MY3. V. Reiner was supported by NSF grant
  DMS-0601010.}
\subjclass{05E10, 05E18, 13A50}
\keywords{cyclic sieving phenomena, exterior power, symmetric power, tensor power, Schur-Weyl duality, fake degree, Frobenius, parking function}
\begin{document}

\begin{abstract}
  We show how to derive new instances of the cyclic sieving phenomenon
  from old ones via elementary representation theory.  Examples are
  given involving objects such as words, parking functions, finite
  fields, and graphs.
\end{abstract}

\maketitle

\section{Introduction}
\label{sec:intro}

The {\it cyclic sieving phenomenon (CSP)} was 
introduced in \cite{RSW}, generalizing Stembridge's 
{\it $q=-1$ phenomenon} \cite{Stembridge}.  The CSP pertains to a 
finite set $X$, carrying
the permutation action of a finite abelian group 
written explicitly as a product
$\CCC:=C_1 \times \cdots \times C_m$ of cyclic groups $C_i$, and
a polynomial $X(\uu):=X(u_1,\ldots,u_m)$ in $\ZZ[\uu]$,
often a generating function for the elements of $X$
according to some natural statistic(s).
One says that the triple $(X,X(\uu),\CCC)$
{\it exhibits the CSP}
if after choosing embeddings\footnote{See Appendix~\ref{appendix}
below for a discussion of the dependence on this choice
of embeddings, and on the choice of decomposition
$\CCC:=C_1 \times \cdots \times C_m$.}
of groups 
$\omega_i:C_i \hookrightarrow \CC^\times$,
one has for every $\cc=(c_1,\ldots,c_m)$ in $\CCC$ that the cardinality of
its fixed point set $X^{\cc}:=\{x \in X: \cc(x)=x\}$ is given by
$$
|X^\cc| = \left[ X(\uu) \right]_{u_i=\omega_i(c_i)}.
$$
In other words, the generating function $X(\uu)$ not only
has the usual property that its evaluation with all $u_i=1$
gives the cardinality $|X|$, but furthermore, its evaluation
at appropriate roots-of-unity carries all the numerical information
about the $\CCC$-orbit structure on $X$.  

For example, one can equivalently rephrase 
the CSP (see \cite[Proposition 3.1]{BarceloReinerStanton})
as a combinatorial interpretation for the coefficients in
the unique expansion
$$
X(\uu) \equiv 
  \sum_{\substack{\dd=(d_1,\ldots,d_m)\\ 0 \leq d_i <|C_i|}} a_\dd \,\, \uu^\dd
 \mod (u_1^{|C_1|}-1,\ldots,u_m^{|C_m|}-1).
$$
Specifically, the CSP asserts that the constant term $a_{(0,0,\ldots,0)}$ 
counts the total number of $\CCC$-orbits on $X$, 
and more generally, the coefficient $a_\dd$ counts the number of
$\CCC$-orbits for which the 
pointwise-$\CCC$-stabilizer subgroup of any element in the
orbit lies in the kernel of the degree one character
\begin{equation}
\label{degree-one-characters}
\omega^\dd:=\prod_{i=1}^m \omega_i^{d_i}.
\end{equation}

In \cite{BarceloReinerStanton, RSW}, various
instances of CSP's were shown, sometimes proven
via representation theory.
The point of the current paper is to show how this viewpoint,
combined with the standard multilinear constructions from
representation theory of
{\it tensor products} $V_1 \otimes V_2$,
{\it symmetric powers} $\Sym^k(V)$,
{\it exterior powers} $\bigwedge^k(V)$,
and {\it tensor powers} $V^{\otimes \ell}:=V \otimes \cdots \otimes V$,
allow one to automatically construct new CSP's from old ones.
Section~\ref{sec:constructions} develops these constructions, and uses them
to derive some interesting new CSP's.  We remark that a somewhat different
use of representation theoretic constructions to derive new CSP's
appears in Westbury \cite{Westbury}.

We illustrate our results in the remainder of this introduction, including some explicit
examples.   For the sake of stating these, recall a notion 
from \cite{RSW}: A cyclic group 
acts {\it nearly freely} on a finite set if either all orbits have the same
size, or if there is a unique singleton orbit and all non-singleton orbits 
have the same size.  We will also need a few notations.  For a positive integer $n$, define
$$
\begin{aligned}[]
[n]&:=\{1,2,\ldots,n\}\\
[n]_u&:=1+u+u^2+\cdots+u^{n-1}
\end{aligned}
$$
and for a polynomial $f(x_1,\ldots,x_n)$
in a variable set $\xx=(x_1,\ldots,x_n)$,
its {\it principal $u$-specialization} is $f(1,u,u^2,\ldots,u^{n-1})$.

\subsection{Words}

Consider the set $[n]^\ell$ of
words $w_1 w_2 \cdots w_\ell$ of length $\ell$
with letters in the alphabet $[n]$.
Given such a word $w$, its {\it inversion number} $\inv(w)$
is the number of pairs $(i,j)$ with $1 \leq i < j \leq \ell$ 
for which $w(i) > w(j)$, while its {\it major index} $\maj(w)$ 
is the sum of all positions $i$ in the range $1 \leq i < n$ for
which $w(i) > w(i+1)$.
A famous result of MacMahon (see \cite{FoataSchutzenberger}) asserts that these two statistics
are equidistributed as one runs over all rearrangements of a fixed word, so that
one has an equality
\begin{equation}
\label{words-maj-inv}
f(\xx,t):=\sum_{w \in [n]^\ell} x_{w_1} \cdots x_{w_\ell} \,\, t^{\maj(w)}
=\sum_{w \in [n]^\ell} x_{w_1} \cdots x_{w_\ell} \,\, t^{\inv(w)}.
\end{equation}

\begin{theorem}
\label{thm:words-biCSP}
Let $X=[n]^\ell$ be permuted by $C_1 \times C_2$
in which $C_1$ is a cyclic group acting nearly freely on the
letter values $[n]$, and $C_2$ is a cyclic group acting nearly freely on the word
positions $[\ell]$.  

Let $X(u,t)$ be the principal $u$-specialization
in the $\xx$-variables of $f(\xx,t)$.

Then $(X,X(u,t),C_1 \times C_2)$ exhibits the CSP.
\end{theorem}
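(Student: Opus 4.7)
The plan is to identify $X=[n]^\ell$ with the permutation basis of the tensor power $V^{\otimes \ell}$ (where $V=\CC^n$), and then, via Schur--Weyl duality, to reduce the theorem to the classical fake-degree cyclic sieving for the symmetric group.  Let $C_1$ act on $V$ through its given permutation of the standard basis $e_1,\ldots,e_n$, and let $C_2$ act on $V^{\otimes \ell}$ by cyclically permuting tensor factors according to its action on $[\ell]$.  These commuting actions permute the tensor basis $\{e_{w_1}\otimes\cdots\otimes e_{w_\ell}\}_{w\in[n]^\ell}$ in exactly the same way that $C_1\times C_2$ permutes $X$, so the trace of $(c_1,c_2)$ equals the number of fixed basis vectors:
$$|X^{(c_1,c_2)}| \;=\; \operatorname{tr}\bigl((c_1,c_2) \mid V^{\otimes \ell}\bigr).$$

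Next I would apply Schur--Weyl duality, $V^{\otimes\ell}\cong\bigoplus_\lambda W^\lambda\otimes U^\lambda$ as a $\GL(V)\times\Symm_\ell$-module, where $W^\lambda$ is the $\GL(V)$-irreducible with character $s_\lambda$ and $U^\lambda$ is the Specht module of shape $\lambda$.  Restricting to $C_1 \times C_2$ yields
$$|X^{(c_1,c_2)}| \;=\; \sum_{\lambda \vdash \ell,\ \ell(\lambda)\le n} s_\lambda(\text{eigenvalues of } c_1 \text{ on } V)\;\chi^\lambda(c_2).$$
Because $C_1$ acts nearly freely on $[n]$, the order $|C_1|$ divides either $n$ or $n-1$, and a short bookkeeping then shows that the multiset of eigenvalues of $c_1$ on $V$ is exactly $\{1,\omega,\omega^2,\ldots,\omega^{n-1}\}$ with $\omega := \omega_1(c_1)$.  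Hence each Schur value above equals the principal $u$-specialization $s_\lambda(1,u,u^2,\ldots,u^{n-1})$ evaluated at $u = \omega_1(c_1)$.

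Pairing this with the well-known MacMahon/RSK Schur expansion
$$f(\xx,t) \;=\; \sum_{\lambda \vdash \ell} s_\lambda(\xx)\, f^\lambda(t), \qquad f^\lambda(t) := \sum_{T \in \operatorname{SYT}(\lambda)} t^{\maj(T)},$$
and principally specializing the $\xx$-variables, I obtain
$$X(u,t) \;=\; \sum_{\lambda} s_\lambda(1,u,u^2,\ldots,u^{n-1})\, f^\lambda(t).$$

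The remaining step, which I expect to be the main input (though standard rather than a genuine obstacle), is the fake-degree identity $\chi^\lambda(c_2) = f^\lambda\bigl(\omega_2(c_2)\bigr)$, valid for every $\lambda \vdash \ell$ and every element $c_2$ of a cyclic subgroup of $\Symm_\ell$ acting nearly freely on $[\ell]$.  This is essentially Springer's regular-element theorem applied to $\Symm_\ell$, and it is already one of the prototypical instances of the CSP treated in \cite{RSW}.  Substituting this identity into the two Schur expansions above matches them term by term, giving $|X^{(c_1,c_2)}| = X(u,t)|_{u=\omega_1(c_1),\, t=\omega_2(c_2)}$ and completing the proof.
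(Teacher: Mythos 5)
Your proposal is correct and follows essentially the same route as the paper's: it is the argument of Proposition~\ref{prop:tensor-power-construction} (Schur--Weyl duality on $V^{\otimes\ell}$, Springer's regular-element theorem giving $\chi^\lambda(c_2)=f^\lambda(\omega_2(c_2))$, and the MacMahon/RSK expansion of $f(\xx,t)$), combined with Proposition~\ref{prop:type-A-regulars}, which the paper uses to encode exactly the eigenvalue-multiset fact you verify by hand for the nearly-free action of $C_1$ on $[n]$.
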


\begin{example*}
Take $n=3$, $\ell=2$, so that $X=[3]^2$.  Let $C_1=\langle c_1
\rangle$ be a cyclic group of order $3$ cyclically permuting the
letter values $[3]$, and let $C_2=\langle c_2 \rangle$ be a cyclic
group of order $2$ swapping the two positions $[2]$ in the words.
Then the set $X=[3]^2$ decomposes into these $C_1 \times C_2$-orbits
$$
\left\{
\begin{matrix}
12& 23& 31\\
u &u^3&u^2t\\
21& 32& 13\\
ut &u^3t&u^2\\
\end{matrix}
\right\}
,\quad
\left\{
\begin{matrix}
11&22&33\\
1 &u^2&u^4\\
\end{matrix}
\right\}
$$
in which each element of $X$ is shown with the term it contributes
to $X(u,t)$ just below it. The orbits are arranged tabularly 
so that $C_1, C_2$ act cyclically on the row, column indices
respectively.  The first orbit is $C_1 \times C_2$-free,
while in the second orbit $c_2$ acts trivially.

From the data above (or see Section~\ref{sec:hook-formulas} below) one can calculate
\[
X(u,t) = 1+u+2u^2+u^3+u^4
        +t(u+u^2+u^3)
\]
and hence
$$
X(u,t) \equiv 2+2u+2u^2 +t(1+u+u^2) \mod (u^3-1,t^2-1).
$$
Note that in this last expression,
the constant term $2$ matches the total number of orbits.
As an example of the root-of-unity evaluations predicted by the CSP,
note that $X(e^{\frac{2\pi i}{3}},1)=X(e^{\frac{2\pi i}{3}},-1)=0$, 
corresponding to the fact that neither
$(c_1,1)$ nor $(c_1,c_2)$ fix any elements of $X$.  On the other hand,
$X(1,-1)=3$ counts the
elements in the second orbit, which are fixed by $(1,c_2)$.  
\end{example*}

\subsection{Finite fields}
Theorem~\ref{thm:words-biCSP} combined with the
Normal Basis Theorem from Galois theory will turn out 
to have the following consequence for the action of the Frobenius endomorphism on a 
finite field $\FF_{q^\ell}$ for any prime power $q$.

\begin{theorem}
\label{thm:finite-field-biCSP}
Let $X=\FF_{q^\ell}$ be permuted by $C_1 \times C_2$
in which the cyclic group $C_1=\FF_q^\times$ of order $q-1$
acts via multiplication, and the cyclic group $C_2=\Gal(\FF_{q^\ell}/\FF_q)$ of
order $\ell$ generated by the Frobenius endomorphism acts as usual.

Let $X(u,t)$ be the same as in Theorem~\ref{thm:words-biCSP}, taken
with $n:=q$.

Then $(X,X(u,t),C_1 \times C_2)$ exhibits the CSP.
\end{theorem}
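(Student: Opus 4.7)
The plan is to reduce Theorem~\ref{thm:finite-field-biCSP} to Theorem~\ref{thm:words-biCSP} by exhibiting a $(C_1\times C_2)$-equivariant bijection between $\FF_{q^\ell}$ and $[q]^\ell$. First I would invoke the Normal Basis Theorem to choose $\alpha\in\FF_{q^\ell}$ whose Galois conjugates $\{\alpha,\alpha^q,\alpha^{q^2},\ldots,\alpha^{q^{\ell-1}}\}$ form an $\FF_q$-basis. Every $x\in\FF_{q^\ell}$ then has a unique expansion $x=\sum_{i=0}^{\ell-1} c_i(x)\,\alpha^{q^i}$ with $c_i(x)\in\FF_q$; after fixing any bijection $\FF_q\leftrightarrow[q]$, sending $x$ to the word $c_0(x)c_1(x)\cdots c_{\ell-1}(x)$ gives a bijection $\FF_{q^\ell}\to[q]^\ell$.

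Next I would verify equivariance of the two actions under this bijection. The Frobenius $\phi\colon x\mapsto x^q$ sends $\alpha^{q^i}$ to $\alpha^{q^{i+1}}$ (indices mod $\ell$), so on the word side it cyclically shifts positions --- a free cyclic action of $C_2=\Gal(\FF_{q^\ell}/\FF_q)$ on $[\ell]$. Multiplication by $\gamma\in\FF_q^\times$ sends $\sum c_i\,\alpha^{q^i}$ to $\sum (\gamma c_i)\,\alpha^{q^i}$, scaling every letter by $\gamma$; the action of $C_1=\FF_q^\times$ on $\FF_q$ has $0$ as its only fixed point and $\FF_q^\times$ as a single regular orbit, so is nearly free on the alphabet $[q]$. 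Since $C_1$ (of order $q-1$) and $C_2$ (of order $\ell$, generated by $\phi$) are both cyclic, the hypotheses of Theorem~\ref{thm:words-biCSP} with $n=q$ are met, and the theorem follows immediately.

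The main obstacle is precisely the equivariance verification, which in turn rests on the Normal Basis Theorem: the crucial point is that Frobenius acts on such a basis by cyclic shift, which is exactly what aligns the Galois action with the position-shift action appearing in Theorem~\ref{thm:words-biCSP}. The only subtlety to watch is the matching of embeddings $\omega_i\colon C_i\hookrightarrow\CC^\times$; choosing any primitive element of $\FF_q^\times$ and $\phi$ itself as the distinguished generators, together with the standard root-of-unity embeddings, makes the specializations of $X(u,t)$ agree with $|X^{(c_1,c_2)}|$ as demanded by the CSP.
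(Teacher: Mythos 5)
Your proof is correct and follows essentially the same route as the paper's: invoke the Normal Basis Theorem to get a basis of Galois conjugates $\{\alpha, \alpha^q, \ldots, \alpha^{q^{\ell-1}}\}$, use the resulting coordinates to build a bijection $\FF_{q^\ell} \to [q]^\ell$, observe that this bijection is $(C_1\times C_2)$-equivariant (Frobenius cyclically shifts positions because $c_i^q=c_i$ for $c_i\in\FF_q$, while scalar multiplication acts nearly freely on letter values, fixing only the label of $0$), and then apply Theorem~\ref{thm:words-biCSP}. The small extra remark you make about matching embeddings $\omega_i$ is not in the paper's proof but is consistent with the discussion in its appendix.
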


\begin{example*}
Take $q=3$, $\ell=2$, so that 
$$
X=\FF_{3^2}=\FF_9=\{0,1,\pi,\pi^2,\pi^3,\pi^4,\pi^5,\pi^6,\pi^7\}
$$
where $\pi$ is a cyclic generator for the multiplicative group
$\FF_9^\times \cong \ZZ_8$.  The subfield $\FF_3$ embeds in $\FF_9$ as
$\FF_3=\{0,1,\beta\}$ where $\beta=\pi^4$, and
$C_1=\FF_3^\times=\langle \beta \rangle=\{1,\beta\}$ is a cyclic group
of order $2$ acting on $X$ by multiplication.  The Frobenius map
$F:\alpha \mapsto \alpha^3$ generates the Galois group
$C_2=\Gal(\FF_{3^2}/\FF_3)=\langle F \rangle$ of order two, also
acting on $X$.  Then the set $X$ decomposes into these three $C_1
\times C_2$-orbits
$$
\left\{
\begin{matrix}
\pi& \overset{F}{\longleftrightarrow}& \pi^3\\
\beta\updownarrow& &\updownarrow \beta\\
\pi^5& \overset{F}{\longleftrightarrow}& \pi^7\\
\end{matrix}
,\qquad
\begin{matrix}
&\pi^2& \\
\beta&\updownarrow &F\\
&\pi^6& \\
\end{matrix}
,\qquad
\xymatrix{
1 \ar@(ul,dl)[]|{F} 
\ar@{<->}^\beta[r] &  
\beta \ar@(ur,dr)[]|{F} 
}
,\qquad
\xymatrix{
0 \ar@(ul,dl)[]|{F} \ar@(ur,dr)[]|{\beta} 
}
\right\}
$$
The first orbit is $C_1 \times C_2$-free.  The second orbit has both
$\beta$ and $F$ acting by swapping the two elements.  The third orbit is
fixed by $F$ and has its two elements swapped by $\beta$.  The last
orbit is a singleton fixed by both $\beta$ and by $F$.

In the example following Theorem~\ref{thm:words-biCSP} we computed
$$
\begin{aligned}
X(u,t) &= 1+u+2u^2+u^3+u^4 +t(u+u^2+u^3)\\
\end{aligned}
$$
and hence
$$
X(u,t) \equiv 4+2u+t+2ut \mod (u^2-1,t^2-1).
$$
Note that in this last expression, the constant term $4$ matches the
total number of orbits.  As an example of the root-of-unity
evaluations predicted by the CSP, note that $X(1,-1)=3$ counting the
elements in the third and fourth orbits, which are fixed by $(1,F)$,
that $X(-1,1)=1$ counting the element in the fourth orbit, which is
fixed by $(\beta,1)$, and that $X(-1,-1)=3$ counting the elements in
the second and fourth orbits, which are fixed by $(\beta,F)$.
\end{example*}

\subsection{Parking functions}

A word of length $\ell$ in the alphabet $\{1,2,\ldots\}$ is called a
{\it parking function} (see, \textit{e.g.}, Haiman \cite{Haiman},
Kung, Sun and Yan \cite{KungSunYan}, Pak and Postnikov
\cite{PakPostnikov}) if the weakly increasing rearrangement $a_1 \leq
a_2 \leq \cdots \leq a_\ell$ of its letters has $a_i \leq i$ for all
$i$.  Theorem~\ref{thm:words-biCSP} turns out to be closely related to
the following result, discussed in Section~\ref{sec:parking-functions}
below.

\begin{theorem}
\label{thm:parking-functions-CSP}
Let $X$ be the set of parking functions of length $\ell$,
permuted by a cyclic group $C$ acting
nearly freely on the set $[\ell]$ of positions.

Let 
$$
X(t):=
\sum_{w \in X} t^{\maj(w)} =
\sum_{w \in X} t^{\inv(w)}.
$$

Then $(X,X(t),C)$ exhibits the CSP.
\end{theorem}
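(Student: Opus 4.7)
The plan is to derive this CSP from Theorem~\ref{thm:words-biCSP} by layering the classical Pollak circle-argument bijection on top of the word CSP.  Setting $n:=\ell+1$, take $C_1 = \ZZ/(\ell+1)\ZZ$ acting freely on the alphabet $[\ell+1]$ by cyclic shift, and let $C_2 := C$ act on positions by the given nearly free action.  Theorem~\ref{thm:words-biCSP} then yields a CSP for the triple $([\ell+1]^\ell,\,X(u,t),\,C_1 \times C_2)$, in which $X(u,t)$ is the principal $u$-specialization of $f(\xx,t)$ with $n=\ell+1$.

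Pollak's circle argument identifies parking functions of length $\ell$ with a transversal for the $C_1$-orbits on $[\ell+1]^\ell$, and since being a parking function depends only on the content multiset (which $C_2$ preserves), this identification is $C_2$-equivariant. Because $C_1$ and $C_2$ commute and $C_1$ acts freely, the fixed set $([\ell+1]^\ell)^c$ is a union of full $C_1$-orbits, and such an orbit lies in $([\ell+1]^\ell)^c$ iff its parking representative is $c$-fixed.  Therefore
$$|X^c| \;=\; \tfrac{1}{\ell+1}\bigl|([\ell+1]^\ell)^c\bigr| \;=\; \tfrac{1}{\ell+1}X(1,\omega(c)) \;=\; \tfrac{1}{\ell+1}\sum_{w \in [\ell+1]^\ell} \omega(c)^{\maj(w)},$$
applying the word CSP at the $C_1$-component $c_1=1$ and observing $X(1,t) = f(1,\ldots,1,t) = \sum_w t^{\maj(w)}$.

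The parking function CSP therefore reduces to the polynomial identity
$$\sum_{w \in [\ell+1]^\ell} t^{\maj(w)} \;=\; (\ell+1)\sum_{p\text{ parking}} t^{\maj(p)}.$$
Grouping each side by content multiset $\nu$ and invoking the $t$-multinomial $\qbin{\ell}{m_1(\nu),\ldots,m_{\ell+1}(\nu)}{t}$ as the $\maj$-generating function for words of content $\nu$, it suffices to prove the analogous identity with each $t^{\maj(\cdot)}$ sum replaced by the corresponding sum of $t$-multinomials. The cyclic letter-shift $\sigma$ induces an action on content multisets; a short arithmetic check shows that, since $\gcd(\ell,\ell+1)=1$, this action is \emph{free} (a $\sigma^k$-fixed multiset forces $(\ell+1)/\gcd(k,\ell+1)$ to divide $\ell$, hence to equal $1$), so every orbit has size exactly $\ell+1$, on which the $t$-multinomial is constant. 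Finally, counting $C_1$-word-orbits whose content lies in a given $\sigma$-orbit $O$ two ways---as $\binom{\ell}{m_1,\ldots,m_{\ell+1}}$ via the free $C_1$-action, and as (number of parking multisets in $O$)$\,\cdot\,\binom{\ell}{m_1,\ldots,m_{\ell+1}}$ via Pollak---pins the parking-multiset count per orbit to exactly $1$; summing $t$-multinomials over that transversal completes the identity.

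The main obstacle is the orbit-counting in the previous paragraph, specifically the verification that each $\sigma$-orbit on content multisets contains exactly one parking multiset.  Both that uniqueness and the freeness of $\sigma$ rely crucially on $\gcd(\ell,\ell+1)=1$ and on Pollak's classical bijection; once those are in hand, the rest of the proof is a mechanical unpacking of Theorem~\ref{thm:words-biCSP}.
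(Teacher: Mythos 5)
Your proposal is correct, and it takes a genuinely different route from the paper. The paper proves Theorem~\ref{thm:parking-functions-CSP} as a special case of the more general Proposition~\ref{prop:rearrangements-CSP}, which asserts the CSP for \emph{any} $\Symm_\ell$-stable collection $X$ of words under a nearly free position action; the proof there reduces to a single $\Symm_\ell$-orbit, identifies it with a flag variety of nested subsets, invokes \cite[Proposition 4.4]{RSW} to get a CSP with the $q$-multinomial, and closes with MacMahon's formula. You instead derive the result from Theorem~\ref{thm:words-biCSP} together with Pollak's circle argument: the words biCSP at $(1,c)$ gives the fixed-point count $|([\ell+1]^\ell)^c|$, the free letter-shift action with Pollak's transversal divides by $\ell+1$ to produce $|X^c|$, and the polynomial identity $\sum_{w\in[\ell+1]^\ell}t^{\maj(w)} = (\ell+1)\sum_{p}t^{\maj(p)}$ (proved by the free $\sigma$-action on content multisets, which exists precisely because $\gcd(\ell,\ell+1)=1$) closes the loop. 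Your argument checks out step by step; the equivariance of the Pollak transversal, the freeness of $\sigma$ on content multisets, and the orbit-count pinning each $\sigma$-orbit to a unique parking multiset are all correct.

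One point worth highlighting: the remark following Proposition~\ref{prop:rearrangements-CSP} in the paper observes that only the restricted case $C\subset\Symm_{\ell-1}$ of the parking-function CSP follows from Theorem~\ref{thm:words-biCSP}, via the subgroup transversal $\ZZ_{\ell+1}^{\ell-1}$ of $\ZZ_{\ell+1}^\ell/\ZZ\mathbf 1$, which is only $\Symm_{\ell-1}$-equivariant. Your observation that the Pollak transversal is $\Symm_\ell$-equivariant (because membership in it depends only on content) lets you recover the full theorem from Theorem~\ref{thm:words-biCSP}, strengthening that remark. The trade-off is generality: the paper's Proposition~\ref{prop:rearrangements-CSP} applies to any rearrangement-closed collection of words, which the paper then exploits for type B, Fuss, and two-boundary parking functions, whereas your Pollak-based argument is tailored to the ordinary parking functions.
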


\begin{example*}
For $\ell=3$, the set $X$ of parking functions is 
\begin{multline*}
X = \{  111,
\quad 112,121,211,
\quad 113,131,311,
\quad 122,212,221,\\
 123,132,213,231,312,321\}.
\end{multline*}
If we compute $X(t)$ term by term using the major index we obtain
\begin{multline*}
X(t)= 
1+
\quad 1+t^2+t+
\quad 1+t^2+t+
\quad 1+t+t^2+\\
 1+t^2+t+t^2+t+t^3 =5 +5t+5t^2+t^3.
\end{multline*}
Taking the cyclic group $C=\ZZ_3$ acting on the positions $[3]$
to be of order $3$, one has
$$
X(t) \equiv 6 +5t+5t^2 \mod t^3-1
$$
whose constant term $6$ counts the total number of $\ZZ_3$-orbits, 
and whose coefficient $5$ on $t^1$ counts the
$5$ free $\ZZ_3$-orbits, namely all but the singleton orbit
$\{111\}$.

Taking the cyclic group $C=\ZZ_2$ acting on the first two positions 
and fixing the third position, one has
$$
X(t) \equiv 10 +6t \mod t^2-1
$$
whose constant term $10$ counts the total number of $\ZZ_2$-orbits,
and whose coefficient of $6$ on $t^1$ counts the $6$
free $\ZZ_2$-orbits, namely those other than the singleton orbits
$$
\{111\}, \{112\},\{113\},\{221\}.
$$ 
\end{example*}

\subsection{Nonnegative matrices}
One can extend the notion of principal specialization to polynomials
$f(\xx,\yy)$ in two sets of variables $\xx=(x_1,\ldots,x_m)$,
$\yy=(y_1,\ldots,y_n)$ by defining their \textit{principal
  $(u,t)$-specialization}
$$
f(1,u,u^2,\ldots,u^{m-1},1,t,t^2,\ldots,t^{n-1}).
$$

\begin{theorem}
\label{thm:matrices-biCSP}
Fix a nonnegative integer $k$, and let $X$ be the collection of all
$m \times n$ matrices $A=(a_{ij})$ with entries in the nonnegative
integers $\NN$ (resp. in $\{0,1\}$) such that $\sum_{ij} a_{ij} = k$.

Let $C_1 \times C_2$ act on $X$,
where $C_1$ and $C_2$ are cyclic groups acting nearly freely on the row indices
$[m]$ and column indices $[n]$.
In the case where $X$ consists of $\{0,1\}$-matrices, make the additional
assumption that $C_1 \times C_2$ is of {\bf odd} order.

Let $X(u,t)$ be the principal $(u,t)$-specialization of
the row-sum and column-sum generating function
$$
\sum_{A =(a_{ij}) \in X} 
\left( \prod_{i,j} (x_i y_j)^{a_{ij}} \right).
$$

Then $(X,X(u,t),C_1 \times C_2)$ exhibits the CSP.
\end{theorem}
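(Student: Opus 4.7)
The approach is to realize $X(u,t)$ as the bigraded character of $\Sym^k(V\otimes W)$ (respectively $\bigwedge^k(V\otimes W)$) and then invoke the symmetric and exterior power constructions developed in Section~\ref{sec:constructions}. Let $V=\CC^m$ with basis $e_1,\ldots,e_m$ on which $C_1$ acts by permuting the basis according to its nearly free action on $[m]$, and equip $V$ with the $u$-grading $\deg e_i = i-1$. Define $W=\CC^n$ analogously with basis $f_1,\ldots,f_n$, a $C_2$-action, and $t$-grading $\deg f_j = j-1$. Then $V\otimes W$ is a bigraded $(C_1\times C_2)$-representation whose basis $\{e_i\otimes f_j\}$ is permuted according to the product action on $[m]\times[n]$, with $e_i\otimes f_j$ carrying bigraded weight $u^{i-1}t^{j-1}$.

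In the $\NN$-matrix case, $\Sym^k(V\otimes W)$ has a monomial basis indexed by multisets of size $k$ drawn from $[m]\times[n]$, equivalently by the matrices $A\in X$, and the basis vector indexed by $A$ has bigraded weight $\prod_{i,j}(u^{i-1}t^{j-1})^{a_{ij}}$. Summing these weights recovers $X(u,t)$ exactly. Because the $(C_1\times C_2)$-action permutes this monomial basis, applying the $\Sym^k$ construction of Section~\ref{sec:constructions} to the bigraded representation $V\otimes W$ (itself built from $V$ and $W$ via the tensor product construction) yields the asserted CSP. The $\{0,1\}$-matrix case runs in parallel, with $\bigwedge^k(V\otimes W)$ and its wedge basis indexed by $k$-subsets of $[m]\times[n]$ replacing $\Sym^k$ and its monomial basis.

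The one genuine subtlety arises in the exterior case: a priori, the induced action on the wedge basis is by signed permutations, and the passage from bigraded character to fixed-point count fails whenever a basis vector is fixed with eigenvalue $-1$. The odd-order hypothesis on $C_1\times C_2$ excludes this possibility, since any element $g$ of odd order $d$ acting as $g\cdot v=\lambda v$ with $\lambda\in\{\pm 1\}$ must satisfy $\lambda^d=1$, forcing $\lambda=+1$. Verifying this sign-freeness is the main technical hurdle beyond the direct invocation of the constructions of Section~\ref{sec:constructions} together with the near-freeness of the $C_i$-actions on $[m]$ and $[n]$.
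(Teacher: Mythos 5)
Your proof is correct and takes essentially the same route as the paper: the paper first invokes Proposition~\ref{prop:type-A-regulars} to get the single-variable CSPs $([m],[m]_u,C_1)$ and $([n],[n]_t,C_2)$, then applies Proposition~\ref{prop:tensor-product-construction} and Proposition~\ref{prop:power-constructions}, and finishes with the bijection between $k$-multisubsets (resp.\ $k$-subsets) of $[m]\times[n]$ and $\NN$-matrices (resp.\ $\{0,1\}$-matrices) with entries summing to $k$. You unwind these propositions into the explicit vector spaces $V$, $W$, $V\otimes W$, $\Sym^k$, $\bigwedge^k$, but the underlying argument, including the odd-order hypothesis to rule out signs in the wedge case, is identical.
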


\begin{example*}
Take $k=2$ and $m=n=2$, so that
$X$ consists of $2\times 2$ nonnegative matrices with entries that
sum to $2$.
Then $C_1,=\langle c_1 \rangle$
and  $C_2=\langle c_2 \rangle$ are cyclic groups of order two
that swap the rows and columns, respectively.
The set $X$ decomposes into these four $C_1 \times C_2$-orbits
$$
\left\{
\begin{matrix}
 \left[
 \begin{matrix}
   2 & 0 \\
   0 & 0 
 \end{matrix}
 \right] &
 \left[
 \begin{matrix}
   0 & 2 \\
   0 & 0 
 \end{matrix}
 \right] \\
1 & t^2\\
  & \\
 \left[
 \begin{matrix}
   0 & 0 \\
   2 & 0 
 \end{matrix}
 \right] &
 \left[
 \begin{matrix}
   0 & 0 \\
   0 & 2 
 \end{matrix}
 \right] \\
u^2 & u^2 t^2\\
\end{matrix}
\right\}
, \,\,
\left\{
\begin{matrix}
 \left[
 \begin{matrix}
   1 & 1 \\
   0 & 0 
 \end{matrix}
 \right]  \\
  t \\
  \\
 \left[
 \begin{matrix}
   0 & 0 \\
   1 & 1
 \end{matrix}
 \right] \\
 u^2t 
\end{matrix}
\right\}
, \,\,
\left\{
\begin{matrix}
 \left[
 \begin{matrix}
   1 & 0 \\
   1 & 0 
 \end{matrix}
 \right] &
 \left[
 \begin{matrix}
   0 & 1 \\
   0 & 1 
 \end{matrix}
 \right] \\
u & ut^2 
\end{matrix}
\right\}
, \,\,
\left\{
\begin{matrix}
 \left[
 \begin{matrix}
   1 & 0 \\
   0 & 1 
 \end{matrix}
 \right] &
 \left[
 \begin{matrix}
   0 & 1 \\
   1 & 0 
 \end{matrix}
 \right] \\
ut & ut 
\end{matrix}
\right\}
$$
in which each element of $X$ is shown with the term it contributes to
$X(u,t)$ just below it.  The orbits are arranged tabularly so that $C_1, C_2$
swap bottom-to-top and left-to-right, respectively.  The first orbit
is $C_1 \times C_2$-free, while in the second orbit $c_2$ acts
trivially, in the third orbit $c_1$ acts trivially, and in the
fourth orbit $c_1$ and $c_2$ both swap the two elements of the orbit.

From the data above (or see Section~\ref{sec:hook-formulas} below) one
can calculate
$$
\begin{aligned}
X(u,t) &= (1+u+u^2)(1+t+t^2) +2ut\\
       &\equiv 4+2u+2t+2ut \mod (u^2-1,t^2-1)
\end{aligned}
$$
In this last expression, the constant term $4$ matches the total
number of orbits.  One also has
$$
X(+1,-1)= X(-1,+1)=X(-1,-1)=2
$$
counting the elements in the second, third, fourth orbits
respectively, as they are the elements fixed by $(1,c_2), (c_1,1),
(c_1,c_2)$, respectively.
\end{example*}

\subsection{Graphs}
One of our main CSP constructions will yield the following result
immediately.
\begin{theorem}
\label{thm:graphs-CSP}
Let $X$ be any of the following collections of graphs with $k$ edges
on vertex set $[n]$:
\begin{enumerate}
\item[(i)] graphs allowing multiedges and loops, including the
  possibility of multiple loops on the same vertex,
\item[(ii)] graphs allowing multiedges, but no loops,
\item[(iii)] graphs allowing no multiedges, and  at most one
loop on each vertex,
\item[(iv)] simple graphs, that is, allowing neither multiedges, nor
  loops.
\end{enumerate}
Let $C$ be a cyclic group of acting nearly freely on the vertex set
$[n]$, and thereby permuting the collection of graphs $X$.
Furthermore, in cases (ii),(iii),(iv) make the additional assumption
that $C$ has {\bf odd} order \footnote{For example, depending upon the
  odd/even parity of $n$, one can take $C$ to be generated by an
  $n$-cycle/ $(n-1)$-cycle.}.

Let $X(u)$ be the $u$-principal specialization of the
degree sequence generating function
\begin{equation}
\label{degree-sequence-generating-function}
\sum_{G \in X} \left( \prod_i x_i^{\deg_G(i)} \right)
\end{equation}
where the vertex-degree $\deg_G(i)$ of vertex $i$ counts edges
incident to $i$ with multiplicity, with each loop incident to $i$
contributing $2$ to $\deg_G(i)$.

Then $(X,X(u),C)$ exhibits the CSP.
\end{theorem}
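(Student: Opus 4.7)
The plan is to realize each of the four sets $X$ as the natural basis of a graded $C$-representation built by applying nested symmetric and exterior powers to the defining permutation representation $W$ of $C$ on $\CC^n$ with basis $x_1,\ldots,x_n$, and then to invoke the $\Sym^k$ and $\bigwedge^k$ CSP constructions developed in Section~\ref{sec:constructions}. First, I would grade $W$ by placing $x_i$ in degree $i-1$; then the graded character of $W$ evaluated at $c \in C$ is the principal $u$-specialization of $\sum_{i \,:\, c(i)=i} x_i$, and the hypothesis that $C$ acts nearly freely on $[n]$ is precisely what forces this to agree with the evaluation of $[n]_u$ at $u=\omega(c)$. This supplies the starting CSP for the triple $([n],[n]_u,C)$.

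Next, I would identify case (i) with the monomial basis of $\Sym^k(\Sym^2 W)$, indexed by size-$k$ multisets drawn from $\{x_ix_j : i \leq j\}$; case (ii) with the monomial basis of $\Sym^k(\bigwedge^2 W)$; case (iii) with the monomial basis of $\bigwedge^k(\Sym^2 W)$; and case (iv) with the monomial basis of $\bigwedge^k(\bigwedge^2 W)$. Under the induced grading, the basis element indexing a graph $G$ sits in degree $\sum_{\{i,j\} \in G} \bigl((i-1)+(j-1)\bigr) = \sum_i (i-1)\deg_G(i)$, which is exactly the exponent produced by principally $u$-specializing $\prod_i x_i^{\deg_G(i)}$. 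Hence $X(u)$ is the graded character at the identity of the corresponding representation, and applying the constructions of Section~\ref{sec:constructions} twice --- first to pass from $W$ to $\Sym^2 W$ or $\bigwedge^2 W$, and then to take $\Sym^k$ or $\bigwedge^k$ of the result --- upgrades the starting CSP to the desired CSP for $(X,X(u),C)$.

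The main obstacle is to ensure that the exterior power constructions genuinely yield permutation representations on the unsigned basis of $k$-subsets, rather than merely signed or virtual representations. A $c$-invariant size-$k$ subset $S$ contributes $\operatorname{sign}(c|_S)$ to $\operatorname{tr}(c \mid \bigwedge^k V)$, and this sign equals $+1$ precisely when every cycle of $c$ on $S$ has odd length. When $|C|$ is odd, every element of $C$ has odd order and therefore only odd-length cycles on any $C$-set functorially constructed from $[n]$; this is exactly the odd-order hypothesis imposed in cases (ii), (iii), (iv). Case (i), which involves only symmetric powers, needs no such restriction.
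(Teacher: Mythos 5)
Your proposal reproduces the paper's own proof: start from the nearly-free CSP for $([n],[n]_u,C)$ given by Proposition~\ref{prop:type-A-regulars}, apply the $\Sym$/$\bigwedge$ power construction (Proposition~\ref{prop:power-constructions}) twice to match cases (i)--(iv) with $\multichoose{\multichoose{[n]}{2}}{k}$, $\multichoose{\bin{[n]}{2}}{k}$, $\bin{\multichoose{[n]}{2}}{k}$, $\bin{\bin{[n]}{2}}{k}$, and check that the principal $u$-specialization of the degree-sequence generating function equals the corresponding plethysm of $[n]_u$. Two minor imprecisions worth flagging: the grading on $W$ by $x_i\mapsto\deg(i-1)$ is not $C$-equivariant, so ``graded character of $W$ at $c$'' is not literally defined (the intended statement is that the Hilbert series $[n]_u$ evaluated at $\omega(c)$ equals the ungraded trace $|[n]^c|$, which is exactly the CSP for $([n],[n]_u,C)$); and $\operatorname{sign}(c|_S)=+1$ is implied by, but not equivalent to, all cycles of $c|_S$ being odd (two even cycles also give sign $+1$) --- the implication you actually use, that odd $|C|$ forces the sign to be $+1$, is correct.
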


\begin{example*}
  Take $n=3, k=3$ and consider case (iii) in
  Theorem~\ref{thm:graphs-CSP}, so that $X$ consists of graphs on
  vertex set $[3]$, with $3$ edges total, disallowing multiedges, and
  allowing at most one loop on each vertex.  Let $C$ be a cyclic group
  of order $3$ cycling the vertices.  Note that $C$ has odd order, as
  required in case (iii).

  One can readily check that there are $20$ graphs in $X$, comprising
  six free $C$-orbits, and two singleton orbits: the graph having $3$
  loops, and the triangle graph having no loops.

  One can also calculate that
  $$
  \begin{aligned}
    X(u) &= 2u^3+2u^4+4u^5+4u^6+4u^7+2u^8+2u^9\\
    &\equiv 8+6u+6u^2 \mod (u^3-1)
  \end{aligned}
  $$
  In this last expression, the constant term $8$ counts the total
  number of orbits, and $X(e^{\frac{2\pi i}{3}})=2$ counts the two
  graphs in the singleton orbits, fixed by $C$.
\end{example*}

\section{Constructions and proofs}
\label{sec:constructions}

The representation theoretic paradigm for proving a CSP is based on a
simple observation, Proposition~\ref{prop:CSP-as-representations} below 
(\textit{cf.} \cite[\S 2]{RSW}).  

Start with the product $\CCC=C_1 \times \cdots \times C_m$ of finite cyclic
groups.  After picking embeddings of groups $\omega_i: C_i \hookrightarrow \CC^\times$,
note that the irreducible representations of $\CCC$ are
exactly the degree one characters $\omega^\dd:=\prod_i \omega_i^{d_i}$
from \eqref{degree-one-characters}
where $\dd=(d_1,\ldots,d_m)$ satisfies $0 \leq d_i < |C_i|$.

Consequently, given any Laurent polynomial in the variable set $\uu=(u_1,\ldots,u_d)$
\begin{equation}
\label{typical-Laurent-polynomial}
X(\uu) = \sum_{\dd \in \ZZ^m} a_{\dd} \uu^{\dd}
\end{equation}
having nonnegative coefficients $a_{\dd}$, one can construct a $\CCC$-representation
$$
V_{X(\uu)}:=\bigoplus_{\dd \in \ZZ^m} \left( \omega^\dd \right)^{\oplus a_{\dd}}
$$
expressed as an explicit direct sum of the $\CCC$-irreducibles, with
multiplicities.

Given a finite set $X$ permuted by any group $G$,
let $\CC[X]$ denote the associated permutation representation
of $G$, having a $\CC$-vector space basis 
indexed by the elements of $X$.

\begin{proposition}
\label{prop:CSP-as-representations}
Given a finite set $X$ permuted by a finite product of cyclic groups
$\CCC$,  and a Laurent polynomial $X(\uu)$
having nonnegative coefficients,
the triple exhibits the CSP if and only if one has
an isomorphism of
$\CCC$-representations
$$
\CC[X] \cong V_{X(\uu)}.
$$
\end{proposition}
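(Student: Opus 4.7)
The plan is to reduce the proposition to a standard fact from the character theory of finite groups: two finite-dimensional complex representations of a finite group are isomorphic if and only if their characters agree on every group element. Both $\CC[X]$ and $V_{X(\uu)}$ are finite-dimensional $\CCC$-representations, so the equivalence to be proved becomes the pointwise equality of their characters at each $\cc=(c_1,\ldots,c_m) \in \CCC$.

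The first step is to recall that the character of a permutation representation evaluated at a group element counts fixed points, so the character of $\CC[X]$ at $\cc$ is $|X^\cc|$. The second step is to compute the character of $V_{X(\uu)}$: since each $\omega^\dd$ is one-dimensional and $V_{X(\uu)}$ is built as a direct sum of copies of the $\omega^\dd$ with multiplicity $a_\dd$ (this is where the nonnegativity hypothesis on the $a_\dd$ is used, to interpret them as multiplicities), the trace of $\cc$ equals
$$
\sum_{\dd} a_\dd \, \omega^\dd(\cc) \;=\; \sum_{\dd} a_\dd \prod_{i=1}^m \omega_i(c_i)^{d_i} \;=\; X(\uu)\bigl|_{u_i=\omega_i(c_i)}.
$$
Equating the two characters for every $\cc$ is then precisely the defining identity of the CSP, which gives both directions of the equivalence simultaneously.

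The only point requiring a brief justification, and the closest thing to an obstacle, is that $X(\uu)$ is allowed to be a genuine Laurent polynomial, so the index set in the direct sum defining $V_{X(\uu)}$ is a priori all of $\ZZ^m$, while $\CCC$ has only $\prod_i |C_i|$ distinct irreducibles. Because $\omega_i$ has order $|C_i|$ in $\CC^\times$, the character $\omega^\dd$ depends on $\dd$ only modulo $|C_1|\ZZ \times \cdots \times |C_m|\ZZ$, so grouping the terms $a_\dd \omega^\dd$ according to this lattice yields a well-defined honest representation whose multiplicities are finite sums of the $a_\dd$, and the character computation above is unaffected by this reduction. No input beyond this bookkeeping and the standard character-determines-representation theorem is needed.
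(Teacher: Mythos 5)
Your proof is correct and takes essentially the same approach as the paper's one-sentence argument: both reduce the isomorphism to equality of characters at each $\cc \in \CCC$, where the permutation character of $\CC[X]$ is $|X^\cc|$ and the character of $V_{X(\uu)}$ is the root-of-unity evaluation of $X(\uu)$, so the equivalence with the CSP is immediate. Your concluding remark about grouping indices $\dd$ modulo the $|C_i|$ is fine but not needed for finiteness, since a Laurent polynomial already has only finitely many nonzero coefficients $a_\dd$.
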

\begin{proof}
The CSP asserts that
every $\cc$ in $\CCC$ has the same
character value on $\CC[X]$, namely $|X^\cc|$,
as its character value on $V_{X(\uu)}$, namely
$\left[ X(\uu) \right]_{u_i=\omega(c_i)}$.
\end{proof}

\subsection{The tensor product construction}

\begin{proposition}
\label{prop:tensor-product-construction}
If both $(X_1,X_1(u),C_1)$ and $(X_2,X_2(u),C_2)$ exhibit the CSP, then so does 
$$
(X_1 \times X_2, \quad X(u,t):=X_1(u)X_2(t), \quad \CCC:=C_1 \times C_2). 
$$ 
\end{proposition}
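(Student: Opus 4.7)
The plan is to apply Proposition~\ref{prop:CSP-as-representations} in both directions: translate the two hypothesized CSPs into isomorphisms of representations, combine them via the external tensor product, and then translate back.

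First I would invoke Proposition~\ref{prop:CSP-as-representations} on each hypothesis to obtain isomorphisms
$$
\CC[X_1] \cong V_{X_1(u)} \quad \text{as } C_1\text{-representations}, \qquad
\CC[X_2] \cong V_{X_2(u)} \quad \text{as } C_2\text{-representations}.
$$
Next, I would use the standard fact that for a product group $C_1 \times C_2$ acting coordinatewise on a Cartesian product $X_1 \times X_2$, the permutation representation factors as an external tensor product:
$$
\CC[X_1 \times X_2] \cong \CC[X_1] \otimes \CC[X_2],
$$
where $C_1$ acts on the first factor and $C_2$ on the second. Combining this with the two isomorphisms above yields $\CC[X_1 \times X_2] \cong V_{X_1(u)} \otimes V_{X_2(u)}$ as $C_1 \times C_2$-representations.

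The remaining step is the bookkeeping lemma that
$$
V_{X_1(u)} \otimes V_{X_2(t)} \cong V_{X_1(u)X_2(t)}
$$
as $C_1 \times C_2$-representations. To see this, write $X_1(u) = \sum_{d_1} a_{d_1} u^{d_1}$ and $X_2(t) = \sum_{d_2} b_{d_2} t^{d_2}$, so by definition the left-hand side is $\bigoplus_{d_1,d_2} (\omega_1^{d_1} \otimes \omega_2^{d_2})^{\oplus a_{d_1} b_{d_2}}$, using distributivity of $\otimes$ over $\oplus$. Each external tensor product $\omega_1^{d_1} \otimes \omega_2^{d_2}$ is precisely the degree-one character $\omega^{(d_1,d_2)}$ of $C_1 \times C_2$ appearing in \eqref{degree-one-characters}, and the multiplicity $a_{d_1} b_{d_2}$ is the coefficient of $u^{d_1} t^{d_2}$ in the product $X_1(u) X_2(t)$. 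This matches the definition of $V_{X_1(u)X_2(t)}$. One final application of Proposition~\ref{prop:CSP-as-representations}, in the reverse direction, then gives the desired CSP.

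There is no real obstacle here; the content is entirely formal once one has Proposition~\ref{prop:CSP-as-representations} available. The only point requiring mild care is keeping straight which factor of $\CCC$ acts on which tensor factor, so that the irreducible characters of $C_1 \times C_2$ truly correspond to the monomials in the product polynomial $X_1(u) X_2(t)$.
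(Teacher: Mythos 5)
Your proposal is correct and follows the same route as the paper's proof: combining Proposition~\ref{prop:CSP-as-representations} with the isomorphism $\CC[X_1 \times X_2] \cong \CC[X_1] \otimes \CC[X_2]$. You have merely spelled out the bookkeeping step $V_{X_1(u)} \otimes V_{X_2(t)} \cong V_{X_1(u)X_2(t)}$ that the paper leaves implicit.
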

\begin{proof}
Combine Proposition~\ref{prop:CSP-as-representations} with the
isomorphism of $\CCC$-representations
$$
\CC[X_1 \times X_2] \cong
\CC[X_1] \otimes \CC[X_2].
$$
\end{proof}

\subsection{The symmetric and exterior power constructions}
\label{subsec:powers}

Here we review the
notational tool of plethystic composition of symmetric functions; 
see \cite[Chap. 7 Appendix 2]{Stanley-EC2}.

A {\it symmetric function} $f(x_1,x_2,\ldots)$ on the infinite
variable set $x_1,x_2,\ldots$ with coefficients in a ring $R$ is a
power series in $R[x_1,x_2,\ldots]$ of bounded degree which is
invariant under all permutations of the subscripts on the variables.
One can define the {\it plethystic composition} $f[X(\uu)]$ of such a
symmetric function $f$ with a Laurent polynomial $X(\uu)$ as in
\eqref{typical-Laurent-polynomial} in the following way: if
$n:=\sum_\dd a_\dd=X(1,\ldots,1)$, then one substitutes $x_j=0$ for
all $j > n$ in $f$, and $x_j=u_1^{d_1} \cdots u_m^{d_m}$ for exactly
$a_\dd$ of the variables $x_j$ with $1 \leq j \leq n$.  For example,
if
$$
X(u)=[n]_u=1+u+u^2\cdots+u^{n-1}
$$
then $f[X(u)]=f(1,u,u^2,\ldots,u^{n-1})$ is 
the {\it principal $u$-specialization} described earlier.

We also review the meaning of symmetric functions and plethystic
composition, with regard to the representations of the general linear
group; see \cite[Chap. 7 Appendix 2]{Stanley-EC2}. Let $V$ be a
complex vector space over $\CC$ of dimension $n$, and $GL(V) \cong
GL_n(\CC)$ the general linear group.  One says that a representation
$GL(V) \overset{\rho}{\rightarrow} GL(U)$ is {\it polynomial} if for
some (equivalently, any) choice of $\CC$-bases for $V, U$, the
matrices representing the action of $\rho(g)$ on $U$ have entries
which are polynomial functions in the entries of the matrices
representing the action of $g$ on $V$.  A polynomial representation
$\rho$ gives rise to a symmetric function $f_\rho(x_1,\ldots,x_n)$ in
a finite variable set (with nonnegative integer coefficients) called
its {\it character}, which is the trace of any element $g$ in $GL(V)$
having eigenvalues $x_1,\ldots,x_n$.  One can then interpret the
plethystic composition $f_\rho[X(\uu)]$ for a Laurent polynomial
$X(\uu)$ as in \eqref{typical-Laurent-polynomial} as follows: if $g$
is an element of $GL(V)$ whose eigenvalues are given by the $\uu^\dd$
with eigenvalue multiplicity $a_\dd$, then $\rho(g)$ acts on $U$ with
trace $f_\rho[X(\uu)]$.

The irreducible polynomial representations $GL(V) \overset{S^\lambda}{\rightarrow} GL(U)$
are indexed by partitions $\lambda$ having at most $n=\dim V$ parts.  The
character $f_{S_\lambda}(\xx)$ is the {\it Schur function}
$s_\lambda(\xx)$ in variablex $x_1,\ldots,x_n$, which has a well-known
expression (see, \textit{e.g.}, \cite[\S 7.10]{Stanley-EC2}) 
as a sum over all {\it column-strict tableaux} $P$ with
entries in $[n]=\{1,2,\ldots,n\}$:
\begin{equation}
\label{Schur-function-as-tableaux}
s_\lambda(\xx)=\sum_P \xx^P
\end{equation}
where $\xx^P:=\prod_{i,j}x_{P_{i,j}}$.  Two prototypical examples of
these polynomial representations, are the $k^{th}$ {\it symmetric
  power} $U=\Sym^k(V)$ and the $k^{th}$ {\it exterior power}
$U=\bigwedge^k(V)$, corresponding to the partitions $\lambda=(k)$ and $\lambda=(1^k)$,
respectively. Their characters are
$$
\begin{aligned}
f_{\Sym^k}(\xx)=s_{(k)}(\xx)&=h_k(\xx) 
=\sum_{1 \leq i_1 \leq \cdots \leq i_k \leq n} x_{i_1} \cdots x_{i_k} \\
f_{\bigwedge^k}(\xx)=s_{(1^k)}(\xx) 
&=e_k(\xx) 
=\sum_{1 \leq i_1 < \cdots < i_k \leq n} x_{i_1} \cdots x_{i_k}.
\end{aligned}
$$

We apply these to obtain two more
CSP constructions involving sets and multisets.
For a finite set $X$ and a nonnegative
integer $k$, let 
$$
\bin{X}{k} \quad \text{ and }\multichoose{X}{k}
$$
denote the collection of all $k$-element subsets and $k$-element
multisubsets of $X$.

\begin{proposition}[{\textit{cf.} \cite[Theorem 1.1]{RSW}}]
\label{prop:power-constructions}
If a triple $(X,X(\uu),\CCC)$ exhibits the CSP, then the triple 
$$
\left(\quad  
\multichoose{X}{k}, 
\quad h_k[X(\uu)], 
\quad \CCC \quad
\right)
$$
also exhibits the CSP.

If, in addition, $\CCC$ has {\bf odd} order, then the triple 
$$
\left(\quad  
\bin{X}{k}, 
\quad e_k[X(\uu)], 
\quad \CCC \quad
\right)
$$
also exhibits the CSP.
\end{proposition}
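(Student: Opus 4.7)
The plan is to apply Proposition~\ref{prop:CSP-as-representations} after comparing the permutation modules $\CC[\multichoose{X}{k}]$ and $\CC[\bin{X}{k}]$ with the symmetric and exterior powers of $\CC[X]$. By hypothesis we already have an isomorphism $\CC[X] \cong V_{X(\uu)}$ of $\CCC$-representations.

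For the multisubset case, there is a canonical $\CCC$-equivariant isomorphism $\Sym^k(\CC[X]) \cong \CC[\multichoose{X}{k}]$, identifying a multisubset $\{\!\{x_{i_1},\ldots,x_{i_k}\}\!\}$ with the symmetrized product $x_{i_1}\cdots x_{i_k}$. Applying $\Sym^k$ to $\CC[X] \cong V_{X(\uu)}$ and invoking the interpretation of plethystic composition described in Section~\ref{subsec:powers} --- since $h_k$ is the character of $\Sym^k$ as a polynomial $GL(V)$-representation --- yields $\Sym^k(V_{X(\uu)}) \cong V_{h_k[X(\uu)]}$. Combining the two gives $\CC[\multichoose{X}{k}] \cong V_{h_k[X(\uu)]}$ as $\CCC$-representations, and Proposition~\ref{prop:CSP-as-representations} completes the first claim.

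For the subset case one would like analogously to identify $\CC[\bin{X}{k}]$ with $\bigwedge^k(\CC[X])$, but this fails in general: in the natural basis $e_{x_{i_1}} \wedge \cdots \wedge e_{x_{i_k}}$ indexed by $k$-subsets $S=\{x_{i_1}<\cdots<x_{i_k}\}$, an element $g\in\CCC$ acts by permuting basis vectors \emph{with a sign} equal to $\mathrm{sgn}(g|_S)$ whenever $g(S)=S$. The main obstacle is to arrange that all such signs are $+1$, and this is precisely where the odd-order hypothesis enters. Since $\CCC$ is abelian, its representations are determined by their characters, so it suffices to check that $\mathrm{tr}\bigl(g,\,\bigwedge^k\CC[X]\bigr) = |\bin{X}{k}^g|$ for each $g \in \CCC$. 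The trace on the left equals $\sum_{S \,:\, g(S)=S}\mathrm{sgn}(g|_S)$; when $|\CCC|$ is odd, the permutation $g|_S$ has odd order and hence factors into odd-length cycles, each an even permutation, so every sign is $+1$ and the traces match. Thus $\CC[\bin{X}{k}] \cong \bigwedge^k(\CC[X])$ as $\CCC$-representations, and the plethystic identity $\bigwedge^k(V_{X(\uu)}) \cong V_{e_k[X(\uu)]}$ --- now applied with $f=e_k$, the character of $\bigwedge^k$ --- together with one final appeal to Proposition~\ref{prop:CSP-as-representations} finishes the second claim.
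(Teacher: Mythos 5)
Your proof is correct and follows essentially the same route as the paper's: identify $\CC[\multichoose{X}{k}]$ with $\Sym^k(\CC[X])$ and $\CC[\bin{X}{k}]$ with $\bigwedge^k(\CC[X])$ as $\CCC$-representations, note that $h_k$ and $e_k$ are the $GL(V)$-characters of these functors so the plethystic compositions give exactly $V_{h_k[X(\uu)]}$ and $V_{e_k[X(\uu)]}$, and invoke Proposition~\ref{prop:CSP-as-representations}. The only cosmetic difference is in how you justify the sign being $+1$ in the exterior case: you argue that an odd-order permutation $g|_S$ has all cycles of odd length and is therefore an even permutation, while the paper argues that a one-dimensional eigenvalue $\pm 1$ of an odd-order element cannot be $-1$. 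These are interchangeable observations, and the rest of the argument is the same.
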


\begin{proof}
  In either case, Proposition~\ref{prop:CSP-as-representations} shows
  that the space $V:=\CC[X]$ has a $\CC$-basis of eigenvectors
  $\{v_\alpha\}$ diagonalizing the action of $\CCC$, in such a way
  that a typical element $\cc=(c_1,\ldots,c_m)$ in $\CCC$ acts with
  eigenvalue $\left[\uu^\dd\right]_{u_i = \omega_i(c_i)}$ on exactly
  $a_\dd$ of the eigenvectors $v_\alpha$.  Thus $\cc$ acts on
  $U=\Sym^k(V)$ and $U=\bigwedge^k(V)$ with traces $h_k[X(\uu)]_{u_i =
    \omega_i(c_i)}$ and $e_k[X(\uu)]_{u_i = \omega_i(c_i)}$,
  respectively.

On the other hand, we claim that one has isomorphisms of
$\CCC$-representations
$$
\begin{aligned}
\Sym^k(V) &\cong \CC\left[ \multichoose{X}{k} \right] \\
{\bigwedge}^k(V) &\cong \CC\left[ \bin{X}{k} \right] \\
\end{aligned}
$$
To see this, note $V=\CC[X]$ has a $\CC$-basis $\{e_x\}_{x \in X}$
permuted in the same way that $\CCC$ acts on $X$.
Therefore $\Sym^k(V)$ and $\bigwedge^k(V)$ have $\CC$-bases of 
monomial symmetric tensors $e_{x_1} \cdots e_{x_k}$ 
and antisymmetric tensors $e_{x_1} \wedge \cdots  \wedge e_{x_k}$.
The group $\CCC$ permutes symmetric tensors
$e_{x_1} \cdots e_{x_k}$ in exactly the way it permutes
$k$-element multisets.
When the group $\CCC$ acts on antisymmetric tensors, it does not
quite act on them in the way that it permutes $k$-element subsets,
but rather permutes and scales them by a sign of $\pm 1$.
However, here one uses the assumption
that $\CCC$ has odd order: when some
$\cc$ in $\CCC$ {\it fixes} some $e_{x_1} \wedge \cdots  \wedge e_{x_k}$
up to sign, meaning that
$$
\cc (e_{x_1} \wedge \cdots  \wedge e_{x_k})
=\pm e_{x_1} \wedge \cdots  \wedge e_{x_k},
$$
then $\cc$ having odd order forces this sign to be $+1$.  
Thus each $\cc$ in $\CCC$ acts with the same trace in
$\bigwedge^k(V)$ as in $\CC\left[ \bin{X}{k} \right].$
\end{proof}

\begin{proof}[Proof of Theorem~\ref{thm:graphs-CSP}]
We will prove a stronger statement.
Assume one has a triple $(X,X(\uu),\CCC)$
exhibiting the CSP.
Proposition~\ref{prop:power-constructions} then shows that the triples
$$
\begin{aligned}
  &\left(\ \multichoose{X}{m}, \quad h_m\left[ X(\uu) \right],\quad \CCC\quad \right) \\
  &\left(\ \bin{X}{m}, \quad e_m\left[ X(\uu) \right],\quad \CCC\quad \right) \\
\end{aligned}
$$
both also exhibit the CSP, assuming that $\CCC$ is of odd order in the
latter case.  Applying Proposition~\ref{prop:power-constructions} one
more time then shows the following result.
\begin{theorem}
\label{thm:hypergraphs-CSP}
Given a triple $(X,X(\uu),\CCC)$ exhibiting the CSP, the following
triples also exhibit the CSP
\begin{enumerate}
\item[(i)]
$$
\left(\quad
\multichoose{\multichoose{X}{m}}{k},\quad
h_k\left[ h_m\left[ X(\uu) \right] \right],\quad
\CCC\quad
\right)
$$
\item[(ii)]
$$
\left(\quad
\multichoose{\bin{X}{m}}{k},\quad
h_k\left[ e_m\left[ X(\uu) \right] \right],\quad
\CCC\quad
\right)
$$
\item[(iii)]
$$
\left(\quad
\bin{\multichoose{X}{m}}{k},\quad
e_k\left[ h_m\left[ X(\uu) \right] \right],\quad
\CCC\quad
\right)
$$
\item[(iv)]
$$
\left(\quad
\bin{\bin{X}{m}}{k},\quad
e_k\left[ e_m\left[ X(\uu) \right] \right],\quad
\CCC\quad
\right)
$$
\end{enumerate}
under the extra assumption that $\CCC$ is of
{\bf odd} order in cases (ii),(iii),(iv).
\end{theorem}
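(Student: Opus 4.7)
The plan is simply to apply Proposition~\ref{prop:power-constructions} twice in succession. That proposition takes any CSP triple and returns a new CSP triple, so nothing prevents its output from being fed back in as input to a second application. The four conclusions of Theorem~\ref{thm:hypergraphs-CSP} correspond precisely to the four ways of choosing between the multiset construction (producing an $h$) and the subset construction (producing an $e$) at each of the two levels.

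More concretely, the first application to $(X, X(\uu), \CCC)$ produces either the triple $\bigl(\multichoose{X}{m}, h_m[X(\uu)], \CCC\bigr)$, with no parity hypothesis, or the triple $\bigl(\bin{X}{m}, e_m[X(\uu)], \CCC\bigr)$, provided $\CCC$ has odd order. A second application of Proposition~\ref{prop:power-constructions} to either of these — using the $h_k$ variant for the outer construction — yields cases (i) and (ii), while using the $e_k$ variant yields cases (iii) and (iv). In each resulting polynomial, the outer plethystic operation $h_k[-]$ or $e_k[-]$ is applied to the polynomial produced by the first step, which is exactly the form asserted in the statement.

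The only bookkeeping worth tracking is the odd-order hypothesis. It is needed at the first application whenever the inner step uses $e_m$ (cases (ii) and (iv)), and at the second application whenever the outer step uses $e_k$ (cases (iii) and (iv)). Since the theorem assumes $\CCC$ of odd order in all three cases where $e$ appears at either level, the hypothesis is always available exactly where it is invoked. I do not expect any real obstacle: the argument is a purely formal iteration, and the only mild point to verify is that the polynomial $h_m[X(\uu)]$ or $e_m[X(\uu)]$ delivered by the first step is exactly what gets substituted into $h_k$ or $e_k$ in the second — which is immediate from the definition of plethystic composition.
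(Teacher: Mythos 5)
Your proposal is correct and is essentially identical to the paper's own proof, which likewise obtains the theorem by applying Proposition~\ref{prop:power-constructions} twice in succession, invoking the odd-order hypothesis exactly when an $e$-construction appears at either level.
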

A little reflection shows that 
Theorem~\ref{thm:graphs-CSP} is the
special case of Theorem~\ref{thm:hypergraphs-CSP} in which 
one takes $m=2$, with $X=[n]$ permuted nearly freely by a cyclic group $C$,
and $X(\uu)=[n]_u$. 
\end{proof}

\begin{remark}
  The assumption that $|C|$ is odd in cases (ii),(iii),(iv) of
  Theorem~\ref{thm:graphs-CSP} is perhaps too restrictive; we have not
  made an exhaustive study of the exact hypotheses on $|C|$ and $k$
  which are necessary and sufficient for these triples to exhibit the
  CSP.  However, we do offer one instance of a negative result
  in this regard, as an indication of what one might expect
  (\textit{cf.} \cite[Lemma~2.3]{RSW}).

\begin{proposition}
\label{prop:no hope}
  Let $X$ be the class of graphs with $k$ edges satisfying the
conditions of Theorem~\ref{thm:graphs-CSP}(iii) or (iv),
and let $X(u)$ be as defined there, as the $u$-principal
specialization of \eqref{degree-sequence-generating-function}.

Let $C=\langle c \rangle = \ZZ_n$ with $c=(1,2,\ldots,n)$
cyclically permuting the vertex set $[n]$,
and assume that $n$ is {\bf even}.  

Then the triple $(X,X(u),C)$ exhibits the CSP if and only if
either 
\begin{enumerate}
\item[$\bullet$] $k \in \{ 0,1, \binom{n}{2},\binom{n}{2}-1\}$.
\item[$\bullet$] $n=4, k=3$.
\end{enumerate}
\end{proposition}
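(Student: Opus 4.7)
The statement is an if-and-only-if, so the plan is to attack the two directions separately. For the sufficient direction, I would verify CSP for each listed case by direct computation. The extreme cases $k = 0$ and $k = \binom{n}{2}$ reduce to one-element sets $X$, where CSP becomes a modular arithmetic check: $X(u) = u^{d}$ must satisfy $n \mid d$ for every cyclic generator. The near-extreme cases $k = 1$ and $k = \binom{n}{2} - 1$ are linked by the $C$-equivariant complementation bijection, so it suffices to handle $k = 1$ directly using the power-sum identity $e_{2} = (p_{1}^{2} - p_{2})/2$ together with the explicit values $p_{j}(1, \zeta, \ldots, \zeta^{n-1}) \in \{0, n\}$ at roots of unity. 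The exception $(n,k) = (4,3)$ is handled by brute force: compute $e_{3}[e_{2}[[4]_{u}]]$ explicitly and check that its four root-of-unity evaluations match the fixed-point counts on the twenty $3$-edge graphs on $[4]$.

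For the necessary direction, assume $n = 2m$ is even and $k$ lies outside the allowed list. The plan is to exhibit CSP failure at the involution $c^{n/2}$, which corresponds to evaluation at $u = -1$ under the standard embedding. On the polynomial side, expanding $X(-1) = e_{k}[e_{2}[[n]_{u}]]$ at $u = -1$ as $e_{k}$ of the multiset $\{(-1)^{i+j}\}_{0 \le i < j < n}$, which splits into $m(m-1)$ copies of $+1$ and $m^{2}$ copies of $-1$, yields the generating identity $\sum_{k} X_{k}(-1)\, t^{k} = (1+t)^{m(m-1)}(1-t)^{m^{2}}$. On the combinatorial side, the involution $c^{n/2}$ partitions the edges of $K_{n}$ into $m$ fixed antipodal edges and $m(m-1)$ orbits of size $2$, giving $\sum_{k} |X_{k}^{c^{n/2}}|\, t^{k} = (1+t)^{m}(1+t^{2})^{m(m-1)}$. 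Comparing coefficients and rewriting the polynomial side as $(1+t)^{m}(1-t^{2})^{m^{2}}$... actually, the difference factors cleanly, reducing the CSP equality at $c^{n/2}$ to the vanishing of $\sum_{2j+l = k,\, j+l \text{ odd}} \binom{m(m-1)}{j}\binom{m}{l}$, a sum of nonnegative terms.

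The main obstacle is the subsequent parity analysis that converts this vanishing condition into the claimed list of $k$. Since each summand is a product of positive binomials, the sum is zero iff no valid $(j,l)$ with $l + 2j = k$, $l \in [0,m]$, and $j \in [0,m(m-1)]$ has $j + l$ odd; equivalently, every valid $j$ has the same parity as $k$. For general $m \ge 2$ the admissible $j$-range has length on the order of $m/2$ and therefore always contains both parities, except at the boundary values $k \in \{0, 1, \binom{n}{2}-1, \binom{n}{2}\}$ where the range degenerates. The exceptional case $(n,k) = (4,3)$ is then the numerical coincidence at $m = 2$ where the range collapses to the single index $j = 1$ of the correct parity. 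Completing the CSP argument for the sufficient direction still requires verifying other cyclic elements $c^{j}$ beyond $c^{n/2}$ (trivial for $k = 0, \binom{n}{2}$, and a direct root-of-unity evaluation for the remaining sufficient cases), and the parallel analysis for case (iii) replaces $e_{2}$ with $h_{2}$, giving a multiset with $m(m+1)$ ones and $m^{2}$ minus-ones and an analogous parity obstruction.
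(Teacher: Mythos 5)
Your approach to the ``only if'' direction is genuinely different from the paper's and more computational. The paper establishes the general inequality $|X(\omega^d)| \le |X^{c^d}|$, with equality precisely when $c^d$ acts by a common scalar on all the monomial tensors $v_G$ indexed by $c^d$-fixed graphs, and then constructs an explicit pair of graphs $G,G'$ fixed by $c^{n/2}$ on which $c^{n/2}$ acts with opposite signs $\pm 1$. You instead compute both sides of the CSP at $c^{n/2}$ in closed form; for case~(iv), $\sum_k X_k(-1)\,t^k = (1-t^2)^{m(m-1)}(1-t)^m$ while $\sum_k |X_k^{c^{n/2}}|\,t^k = (1+t^2)^{m(m-1)}(1+t)^m$, and you reduce the comparison to a parity statement about the admissible $j$-interval. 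The two arguments attack the same group element $c^{n/2}$, and yours has the virtue of being fully explicit and of also settling the $n=4$ cases systematically, which the paper disposes of as ``trivially verified.''

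There is, however, a genuine gap: the shift. The proposition does not hold for the literal $X(u)$ of Theorem~\ref{thm:graphs-CSP} at $k=1$ or $k=\binom{n}{2}$; your own formula gives $X(-1) = -m$ for $k=1$, $n=2m$, while $|X^{c^{n/2}}| = m$. The paper's proof reveals the intended reading when it writes ``there is no integer $m$ such that $u^m X(u)$ gives a triple\ldots{} that exhibits the CSP'' --- the statement is to be understood up to replacing $X(u)$ by $u^a X(u)$ for an arbitrary integer $a$. Your ``if'' verifications must therefore produce and check the appropriate shift (for example $u\,X(u)$ for $n=4$, $k=1$), and your ``only if'' parity criterion must be strengthened accordingly: you need to show $|X(-1)| < |X^{c^{n/2}}|$, not merely $X(-1)\neq |X^{c^{n/2}}|$, since multiplication by $\omega^{a\cdot n/2}=(-1)^a$ can absorb a single sign. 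In your notation this means showing that \emph{both} the $j$-even and $j$-odd contributions are positive --- equivalently that the admissible $j$-interval contains both parities --- not merely that some valid $j$ has parity different from $k$. (Note that since $l\equiv k$ forces $j+l\equiv j+k$, the sign $(-1)^{j+l}$ in your expansion of $X(-1)$ is determined by the parity of $j$ alone, so ``all $j$ of one parity'' is the correct degeneration condition, and it happens exactly on your exceptional list.) With that correction your interval-size argument closes the proof; without it, as written, the proposal would appear to \emph{disprove} the ``if'' direction at $k=1$ and $k=\binom{n}{2}$.
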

\begin{proof}
  The ``if'' direction can be trivially verified. For the ``only if''
  direction, we prove something stronger: For even $n \geq 6$ and $2
  \leq k \leq \binom{n}{2}-2$, \textit{there is no integer $m$ such
    that $u^m X(u)$ gives a triple $(X,u^mX(u),C)$ that exhibits the
    CSP}.

If there were such an integer $m$, then for each $c^d$ in $C$
and $\omega=e^{\frac{2\pi i}{n}}$ one would have 
$$
|X^{c^d}| = | \omega^{dm} X(\omega^d)| = |X(\omega^d)|.
$$
The proof of Theorem~\ref{thm:hypergraphs-CSP} shows
that $X(\omega^d)$ is the trace of $c^d$ acting on
$U=\Sym^k(\bigwedge^2(V))$ or $U=\bigwedge^k(\bigwedge^2(V))$,
in either case.  Since these spaces $U$ have $\CC$-bases
of monomial tensors $v_G$ indexed by graphs $G$, permuted by $C$ up
to root-of-unity scalar multiples, one has
$$
|X(\omega^d)| = \text{ trace of }c^d \text{ on }U
               = \left|\sum_{\substack{G \in X:\\ c^d(G)=G}} 
                       \frac{ c^d(v_G) }{v_G} \right|
               \leq \sum_{\substack{G \in X:\\ c^d(G)=G}} 
                       \left| \frac{ c^d(v_G) }{v_G} \right|
               =|X^{c^d} |
$$
since each of the scalars $\frac{ c^d(v_G) }{v_G}$
is a root-of-unity, with complex modulus $1$.
Furthermore, the case of equality occurs
if and only if these scalars $\frac{c^d(v_G) }{v_G}$ for graphs 
$G$ fixed by $c^d$ are all equal, independent of the choice of $G$.
Thus one can disprove the existence of such a CSP by exhibiting
a choice of $d$ and a choice of two graphs $G, G'$ fixed by
$c^d$ such that $\frac{ c^d(v_G) }{v_G} \neq \frac{ c^d(v_{G'}) }{v_{G'}}$.

When $n=2m$ is even, and $2 \leq k \leq \binom{n}{2}-2$ and $n \geq
6$, one can exhibit such $G,G'$ fixed by $ c^m=(1,m+1)(2,m+2)(3,m+3)
\cdots (m-1,n-1) (m,n) $. Create $G$ by starting with the two edges $
\{1,m+1\},\{2,m+2\} $ and completing $G$ with any $k-2$ other edges
that make $c^{m}(G)=G$.  Then obtain $G'$ from $G$ by replacing the
above two edges with $ \{1,m+2\},\{2,m+1\}, $ and leaving the other
$k-2$ edges of $G$ the same.  One then calculates in $U=
\bigwedge^k(\bigwedge^2(V))$ or $U=\bigwedge^k(\Sym^2(V))$ that
$$
\begin{array}{ll}
c^m\left( (e_1 \wedge e_{m+1}) \wedge (e_2 \wedge e_{m+2}) \right)
&= (e_{m+1} \wedge e_1) \wedge (e_{m+2} \wedge e_2)\\
&= +  (e_1 \wedge e_{m+1}) \wedge (e_2 \wedge e_{m+2})\\
& \\
c^m\left( (e_1 \cdot e_{m+1}) \wedge (e_2 \cdot e_{m+2}) \right)
&= (e_{m+1} \cdot e_1) \wedge (e_{m+2} \cdot e_2)\\
&= +  (e_1 \cdot e_{m+1}) \wedge (e_2 \cdot e_{m+2}),
\end{array}
$$
while,
$$
\begin{array}{ll}
c^m\left( (e_1 \wedge e_{m+2}) \wedge (e_2 \wedge e_{m+1}) \right)
&= (e_{m+1} \wedge e_2) \wedge (e_{m+2} \wedge e_1)\\
&= -  (e_1 \wedge e_{m+2}) \wedge (e_2 \wedge e_{m+1})\\
& \\
c^m\left( (e_1 \cdot e_{m+2}) \wedge (e_2 \cdot e_{m+1}) \right)
&= (e_{m+1} \cdot e_2) \wedge (e_{m+2} \cdot e_1)\\
&= -  (e_1 \cdot e_{m+2}) \wedge (e_2 \cdot e_{m+1})
\end{array}
$$
Since $G, G'$ share all $k-2$ other edges, we have
$$\frac{ c^m(v_G) }{v_G} =- \frac{ c^m(v_{G'}) }{v_{G'}},$$
in both cases.
\end{proof}

\end{remark}

\vskip.2in
\noindent
{\it Proof of Theorem~\ref{thm:matrices-biCSP}.}
Let $C_1, C_2$ act nearly freely on $[m], [n]$ respectively,
so that the triples
$([m],[m]_u,C_1)$ and $([n],[n]_t,C_2)$
both exhibit the CSP by Proposition~\ref{prop:type-A-regulars}.
Proposition~\ref{prop:tensor-product-construction}
implies 
$$
(\quad 
[m] \times [n], \quad 
[m]_u [n]_t, \quad 
C_1 \times C_2 \quad)
$$
also exhibits the CSP, and
then Proposition~\ref{prop:power-constructions}
shows that
$$
\begin{aligned}
&\left(\quad
\multichoose{[m] \times [n]}{k},\quad
h_k\left[ \,\, [m]_u [n]_t \,\, \right],\quad
C_1 \times C_2\quad
\right) \\
&\left(\quad
\bin{[m] \times [n]}{k},\quad
e_k\left[ \,\, [m]_u [n]_t \,\, \right],\quad
C_1 \times C_2 \quad
\right)
\end{aligned}
$$
also exhibit the CSP, assuming that $C_1 \times C_2$
is of odd order in the latter case.

Now use the usual bijection between
$k$-element subsets (resp., multisubsets) of $[m] \times [n]$
and $m \times n$ matrices $A=(a_{ij})$ whose entries sum to $k$
having $\{0,1\}$ (resp., nonnegative integer) entries:
the entry $a_{ij}$ gives the multiplicity with which
the element $(i,j)$ of $[m] \times [n]$ appears in
the $k$-element subset (resp., multiset).
\qed

\begin{remark}
  It is perhaps worth comparing Theorem~\ref{thm:matrices-biCSP} with
  recent results of Rhoades \cite{Rhoades}.  He again considers a
  subset $X$ of all matrices $A=(a_{ij})$ having nonnegative (resp.,
  $\{0,1\}$) entries.  However his matrices are defined by having
  fixed row and column sum vectors $\mu, \nu$, such that $\mu, \nu$
  are invariant under cyclic groups $C_1,C_2$.  Thus the product $C_1
  \times C_2$ again acts on $X$ by having $C_1 \times C_2$ permute row,
  column indices.  His results \cite[Theorems 1.3, 1.4]{Rhoades}
  describe generating functions $X(u,t)$ for a triple $(X,X(u,t),C_1
  \times C_2)$ exhibiting a CSP in this situation, derived from
  Kostka-Foulkes polynomials, and related to the charge statistics on
  biwords.
\end{remark}

\subsection{The tensor power construction}

Our last construction makes use of two basic representation theoretic
facts:  Schur-Weyl duality in tensor powers $V^{\otimes \ell}$
and the type $A$ case of Springer's theory of regular elements.
We quickly review these here.

\begin{proposition}(Schur-Weyl duality)
Regarding the $\ell$-fold tensor product 
$V^{\otimes \ell}$
as a $GL(V) \times \Symm_\ell$-representation
in which $GL(V)$ acts diagonally and $\Symm_\ell$ permutes the tensor
positions $[\ell]$, one has the following irreducible decomposition:
\begin{equation}
\label{Schur-Weyl-duality}
V^{\otimes \ell} \cong \bigoplus_{\lambda \vdash \ell}
   S^\lambda \otimes \chi^\lambda,
\end{equation}
where $S^\lambda, \chi^\lambda$, respectively, are
the irreducible representations of $GL(V), \Symm_\ell$, respectively,
indexed by $\lambda$.  
\end{proposition}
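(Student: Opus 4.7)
The plan is to invoke the double centralizer theorem from associative algebra. First I would observe that the actions commute: $\Symm_\ell$ permutes the tensor positions while $GL(V)$ acts diagonally, so these yield algebra homomorphisms
$$\CC[\Symm_\ell] \longrightarrow \operatorname{End}(V^{\otimes \ell}) \longleftarrow \CC[GL(V)]$$
whose images $B$ and $A$ centralize one another. Since $\CC[\Symm_\ell]$ is semisimple and $V^{\otimes \ell}$ is finite-dimensional, $B$ is a semisimple subalgebra. Once one shows that $A$ and $B$ are mutual centralizers inside $\operatorname{End}(V^{\otimes \ell})$, the double centralizer theorem then produces a multiplicity-free bimodule decomposition
$$V^{\otimes \ell} \cong \bigoplus_\lambda U_\lambda \otimes \chi^\lambda,$$
with $U_\lambda$ irreducible over $GL(V)$ and $\chi^\lambda$ running through the irreducibles of $\Symm_\ell$ appearing in $V^{\otimes \ell}$.

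The crux is to show mutual centralization, and the key direction is that the centralizer of $B$ is exactly $A$. Using the canonical identification $\operatorname{End}(V^{\otimes \ell}) \cong \operatorname{End}(V)^{\otimes \ell}$, the centralizer of $\Symm_\ell$ becomes the space of $\Symm_\ell$-invariants, namely $\Sym^\ell(\operatorname{End}(V))$. A polarization argument then shows that $\Sym^\ell(\operatorname{End}(V))$ is spanned by diagonal pure tensors $A^{\otimes \ell}$ with $A \in \operatorname{End}(V)$, and since $GL(V)$ is Zariski dense in $\operatorname{End}(V)$ and the map $g \mapsto g^{\otimes \ell}$ is polynomial, the span of $\{g^{\otimes \ell} : g \in GL(V)\}$ already fills out $\Sym^\ell(\operatorname{End}(V))$. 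This is precisely the image of $\CC[GL(V)]$, establishing that $A$ is the full centralizer of $B$.

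Finally I would identify the bimodule components: the partitions $\lambda$ indexing $\chi^\lambda$ are necessarily those of $\ell$ (and of length at most $\dim V$, with $U_\lambda = 0$ otherwise), and each $U_\lambda$ can be pinned down as the Schur module $S^\lambda$ by constructing it explicitly via the Young symmetrizer $c_\lambda \in \CC[\Symm_\ell]$ acting on $V^{\otimes \ell}$, or alternatively by comparing characters through the Frobenius formula $s_\lambda(\xx) = \frac{1}{\ell!}\sum_{w \in \Symm_\ell} \chi^\lambda(w) p_{\operatorname{cycle}(w)}(\xx)$ evaluated on the eigenvalues of $g \in GL(V)$.

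I expect the main obstacle to be the polarization step, i.e.\ showing that $\Sym^\ell(\operatorname{End}(V))$ is spanned by the diagonal tensors $g^{\otimes \ell}$; all of the representation-theoretic structure of the theorem hinges on this surjectivity statement. Everything else — the double centralizer machinery, the parametrization by partitions, and the final identification of $U_\lambda$ with $S^\lambda$ — is essentially formal once that spanning statement is in hand.
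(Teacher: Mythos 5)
The paper does not prove this proposition: Schur--Weyl duality is stated as a recalled classical fact (a ``basic representation theoretic fact'' being ``quickly reviewed''), with no argument supplied. So there is nothing in the paper to compare your proof against line by line.

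That said, your outline is the standard textbook proof and is correct. The double centralizer framework is set up properly: $B=$ image of $\CC[\Symm_\ell]$ is semisimple, and the heart of the matter is indeed that $A=$ span of $\{g^{\otimes\ell}: g\in GL(V)\}$ is the full centralizer of $B$. Your route to this is right: the $\Symm_\ell$-invariants of $\operatorname{End}(V)^{\otimes\ell}$ are $\Sym^\ell(\operatorname{End}(V))$, polarization (valid here because we are over $\CC$, or more generally in characteristic $0$ or $>\ell$) shows this symmetric power is spanned by diagonal tensors $A^{\otimes\ell}$, and Zariski density of $GL(V)$ in $\operatorname{End}(V)$ upgrades this to a spanning set indexed by invertible elements, since any linear functional killing $\{g^{\otimes\ell}:g\in GL(V)\}$ is a polynomial vanishing on a dense open set and hence vanishes on all of $\operatorname{End}(V)^{\otimes\ell}$. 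The final identification of $U_\lambda$ with the Schur module $S^\lambda$ via Young symmetrizers (or by matching characters against the Frobenius/Schur function formula) is also standard and fine. Two small points worth making explicit if you were to flesh this out: (i) you should record that the double centralizer theorem requires $V^{\otimes\ell}$ to be a faithful semisimple $B$-module, which holds because $\CC[\Symm_\ell]$ is semisimple and every simple module is a summand of $V^{\otimes\ell}$ when $\dim V\geq\ell$ (and the statement for smaller $\dim V$ is recovered by noting which $\chi^\lambda$ actually occur, namely those with $\ell(\lambda)\leq\dim V$); and (ii) multiplicity-freeness of the bimodule decomposition is part of the conclusion of the double centralizer theorem, which is exactly what makes the $S^\lambda\otimes\chi^\lambda$ pairing canonical.

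Your self-assessment that polarization is the crux is accurate; everything else is formal machinery once that surjectivity is established.
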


Springer \cite{Springer} introduced the following
crucial notion.

\noindent
\vskip.1in
{\bf Definition.}
A {\it regular element} in a finite subgroup $W \subset GL(U)$
generated by (complex) reflections is defined to be an element $c$ 
that has an eigenvector lying in $U^{\reg}$,
where $U^{\reg}$ is the complement within $U$ of 
the reflecting hyperplanes for the elements of $W$.
\vskip.1in

Given an irreducible $W$-character $\chi$, the
value $\chi(c)$ on a regular element turns out to be 
determined by the {\it fake-degree polynomial} $f^\chi(t)$,
defined as the polynomial whose coefficient of $t^d$ gives the multiplicity of 
$\chi$ within the $d^{th}$ graded component 
of the {\it coinvariant algebra} $\CC[U]/(\CC[U]^W_+)$;
see \cite[\S 2.5.]{Springer}.

\begin{theorem} \cite[Proposition 4.5]{Springer}
\label{thm:Springer}
Let $c$ be a regular element $c$ in
a finite complex reflection group $W$ acting on $U$, say with eigenvalue $\omega$ on
some eigenvector in $U^{\reg}$.  Then for any $W$-irreducible character $\chi$, 
one has $\chi(c)=f^\chi(\omega^{-1})$
\end{theorem}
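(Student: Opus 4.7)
The plan is to use the Chevalley--Shephard--Todd structure theorem for $\CC[U]$ together with the special properties of the regular eigenvector $v$ to extract $\chi(c)$ via a generating function identity on the coinvariant algebra $R = \CC[U]/(\CC[U]^W_+)$.

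First, by Chevalley--Shephard--Todd, $\CC[U] \cong \CC[U]^W \otimes_\CC R$ as graded $W$-modules, where $\CC[U]^W = \CC[f_1, \ldots, f_n]$ is polynomial with $\deg f_i = d_i$. By the definition of the fake-degree polynomial, the multiplicity of the $W$-irreducible $V_\chi$ in $R_d$ is $[t^d] f^\chi(t)$, so the graded trace of $c$ on the $\chi$-isotypic component $R^\chi$ of $R$ is
\begin{equation*}
\sum_d \mathrm{tr}(c, R_d^\chi)\, t^d \;=\; \chi(c)\, f^\chi(t).
\end{equation*}
To isolate this from the whole graded character of $c$ on $R$, one applies the central idempotent $p_\chi = (\chi(1)/|W|) \sum_w \overline{\chi(w)}\, w$, so that $\mathrm{grch}(c, R^\chi) = \chi(c) f^\chi(t)$ while $\mathrm{grch}(c, R) = \sum_\chi \chi(c) f^\chi(t)$.

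Second, exploit the regular eigenvector $v$. Since $v \in U^\reg$ has trivial $W$-stabilizer, the $W$-equivariant evaluation $\Phi : \CC[U] \to \CC[Wv]$ is surjective with image the regular representation of $W$. The kernel $I(Wv)$ contains each $f_i - f_i(v)$, and a standard check shows $\mathrm{gr}(I(Wv)) = (f_1,\ldots,f_n)$, so the associated graded of the filtered $W$-module $\CC[Wv]$ recovers $R$. The crucial eigenvalue relation $(c \cdot f)(v) = f(c^{-1}v) = \omega^{-d} f(v)$ for $f \in \CC[U]_d$ tracks the degree-$d$ grading into a scaling by $\omega^{-d}$ on the associated graded. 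Combined with Molien's formula $\mathrm{grch}(c, \CC[U])(t) = \prod_i (1 - \zeta_i^{-1} t)^{-1}$, where $\zeta_1,\ldots,\zeta_n$ are the eigenvalues of $c$ on $U$ (one of which equals $\omega$), the Chevalley--Shephard--Todd factorization delivers the generating-function identity
\begin{equation*}
\chi(c)\, f^\chi(t) \;=\; \frac{\chi(1)}{|W|} \sum_{w \in W} \overline{\chi(w)} \prod_{i=1}^n \frac{1 - t^{d_i}}{1 - \mu_i(cw)^{-1} t},
\end{equation*}
where $\mu_i(cw)$ are the eigenvalues of $cw$ on $U$.

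The main obstacle is the final specialization at $t = \omega^{-1}$. Apparent poles in the $w = 1$ Molien summand, arising from factors with $\mu_i(c) = \omega^{-1}$, must be reconciled with the zeros $1 - \omega^{-d_i}$ in the numerator; this is precisely controlled by Springer's sharp regularity result that $\dim(U^c_\omega) = |\{i : \omega^{d_i} = 1\}|$, which ensures a finite limit and forces the right-hand side to collapse to $\chi(c) f^\chi(\omega^{-1})$. Equivalently, and perhaps more conceptually, one reads the answer off the filtered-to-graded comparison: the $\chi$-isotypic trace of $c$ on $\CC[Wv]^\chi$ (computed via the regular representation) equals its trace on the associated graded $R^\chi$ weighted by the $\omega^{-d}$ scalings, giving $\chi(c) = \sum_d [t^d] f^\chi(t)\, \omega^{-d} = f^\chi(\omega^{-1})$. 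Carrying out this cancellation carefully is the heart of \cite[Prop.~4.5]{Springer}.
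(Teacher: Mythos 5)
The paper itself does not prove this statement: it is quoted directly as \cite[Proposition 4.5]{Springer} and used as a black box, so there is no ``paper's own proof'' to compare against. What you have written is, in outline, a reconstruction of Springer's original argument: factor $\CC[U]\cong\CC[U]^W\otimes R$ via Chevalley--Shephard--Todd, use that the regular eigenvector $v$ has trivial stabilizer so evaluation gives a $W$-equivariant surjection $\CC[U]\to\CC[Wv]\cong\CC[W]$, and exploit the relation $(c\cdot f)(v)=\omega^{-\deg f}f(v)$ to connect the degree grading to a $\langle c\rangle$-twist. Those are exactly the right ingredients.

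Two points in your write-up need repair. First, the Molien-series detour does not close the loop as stated: applying the central idempotent and evaluating your displayed sum at $t=\omega^{-1}$ just reproduces $\chi(c)f^\chi(\omega^{-1})$, i.e.\ the quantity you are trying to identify, not an independently computed value you can set it equal to. To make a Molien-style argument work you would instead compute the \emph{untwisted-by-$\chi$} total trace $\sum_d\mathrm{tr}(c,R_d)\,\omega^{-d}$ both from the product formula $\prod_i(1-t^{d_i})/(1-\mu_i(c)^{-1}t)$ at $t=\omega^{-1}$ (this is where the zero/pole cancellation and Springer's count $\dim U^c_\omega=\#\{i:\omega^{d_i}=1\}$ genuinely enter) and from the $\CC[W]$ picture, and only then refine isotypically. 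Second, in your concluding ``conceptual'' step, $c$ is playing two commuting roles on a graded complement $M\cong R$: the $W$-module action and the grading twist $m\mapsto\omega^{-\deg m}m$. Under $\Phi|_M\colon M\xrightarrow{\sim}\CC[Wv]\cong\CC[W]$, the first matches left multiplication and the second matches right multiplication by $c^{-1}$, since $\Phi(m)(wc)=m(wcv)=\omega^{\deg m}\Phi(m)(w)$. The identity $\chi(c)=\sum_d[t^d]f^\chi(t)\,\omega^{-d}$ then comes from comparing traces of the grading twist on the $\chi$-\emph{multiplicity space} (giving $f^\chi(\omega^{-1})$) with the trace of right multiplication by $c^{-1}$ on $V_\chi^*$ in the bimodule decomposition $\CC[W]\cong\bigoplus_\chi V_\chi\otimes V_\chi^*$ (giving $\overline{\chi(c^{-1})}=\chi(c)$). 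As you have phrased it, ``the $\chi$-isotypic trace of $c$ on $\CC[Wv]^\chi$'' reads as the left $W$-action trace, which is $\chi(1)\chi(c)$, not $\chi(c)$; the $\chi(1)$ happens to cancel against a matching factor on the other side, but the two actions of $c$ should be kept distinct for the argument to be airtight.
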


\vskip.1in
\noindent
{\bf Example.}
Regarding $W=\Symm_\ell$ as a complex reflection group acting on $U=\CC^\ell$ 
by permuting coordinates, there is a formula 
(due originally to Lusztig; see \cite[Prop. 4.11]{Stanley-invariants}) 
for the fake-degree polynomials
$f^{\lambda}(t)$ associated to the irreducible $\chi^\lambda$,
as a sum over {\it standard Young tableaux} $Q$ of shape $\lambda$
\begin{equation}
\label{fake-degree-as-tableaux}
f^{\lambda}(t) = \sum_Q t^{\maj(Q)}
\end{equation}
in which $\maj(Q)$ is the sum of those entries $i$ in $Q$ for which $i+1$ appears
in a lower row than $i$.
One can also readily check the following:

\begin{proposition}[{\textit{cf.} \cite[\S 5.1]{Springer}}]
\label{prop:type-A-regulars} 
Let $W=\Symm_\ell$ regarded as a complex reflection group  acting on $U=\CC^\ell$
by permuting coordinates.  The following are equivalent for an
element $c$ in $\Symm_\ell$

\begin{enumerate}
\item[(i)] $c$ is regular.
\item[(ii)] $c$ permutes the set of coordinates $[\ell]$ nearly freely.  
\item[(iii)] If $c$ has multiplicative order $d$, then every
primitive $d^{th}$ root of unity is achieved as an eigenvalue for $c$
on at least one eigenvector lying in $U^{\reg}$.  
\item[(iv)] The triple 
$$
(X:=[\ell], \quad X(u):=[\ell]_u, \quad C:=\langle c \rangle)
$$
exhibits the CSP.
\end{enumerate}
In particular, for any embedding $C \overset{\omega}{\rightarrow} \CC^\times$ 
of the cyclic group $C=\langle c \rangle$, such elements $c$
as in (i)-(iv)
satisfy $\chi^\lambda(c)=\left[ f^\lambda(t) \right]_{t=\omega(c)}$.
\end{proposition}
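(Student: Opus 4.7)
The plan is to establish the chain of equivalences (i) $\Leftrightarrow$ (ii) $\Leftrightarrow$ (iii) $\Leftrightarrow$ (iv) via a direct analysis of the cycle structure of $c$ on the coordinates of $\CC^\ell$, and then to derive the character identity as an application of Theorem~\ref{thm:Springer}. First I would set up the geometry: for $W=\Symm_\ell$ acting on $U=\CC^\ell$ by coordinate permutation, the reflecting hyperplanes are the diagonals $\{x_i=x_j\}$, so $U^{\reg}$ is the set of vectors with pairwise distinct coordinates. Writing the cycle type of $c$ as $(d_1,\ldots,d_s)$, I would decompose $\CC^\ell$ as the direct sum of the $c$-invariant coordinate subspaces indexed by the cycles: on a cycle of length $d_i$, the eigenvalues of $c$ are the $d_i$-th roots of unity, each with a one-dimensional eigenspace whose coordinates along the cycle form a geometric progression $\alpha_i,\alpha_i\zeta^{-1},\ldots,\alpha_i\zeta^{-(d_i-1)}$.

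For (i) $\Rightarrow$ (ii) and (ii) $\Rightarrow$ (iii), I would analyze when a $\zeta$-eigenvector $v$, supported on precisely the cycles with $\zeta^{d_i}=1$, can lie in $U^{\reg}$. Within a single supported cycle of length $d_i$, the $d_i$ coordinates $\alpha_i\zeta^{-j}$ are pairwise distinct iff $\zeta$ is a \emph{primitive} $d_i$-th root of unity; the unsupported cycles contribute coordinate value zero, which forbids having two such cycles of length $\geq 2$. A short case analysis forces the supported cycles to share a common length $d$ with $\zeta$ a primitive $d$-th root, and at most one unsupported cycle, of length $1$---which is precisely the nearly free condition (ii). Conversely, given (ii) and any primitive $d$-th root $\zeta$, generic choice of the scalars $\alpha_i$ makes all coordinates of $v$ distinct, yielding (iii); and (iii) $\Rightarrow$ (i) is immediate from the definition of regular.

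The equivalence (ii) $\Leftrightarrow$ (iv) is purely arithmetic. When $c$ acts nearly freely and has order $d$, the fixed-point count $|X^{c^k}|$ equals $\ell$ if $d\mid k$, equals $1$ if $d\nmid k$ and a singleton orbit is present, and equals $0$ otherwise. On the generating function side, with $\omega:C\hookrightarrow\CC^\times$ sending $c$ to a primitive $d$-th root $\zeta$, the evaluation of $[\ell]_u$ at $u=\zeta^k$ equals $(\zeta^{k\ell}-1)/(\zeta^k-1)$ when $\zeta^k\neq 1$, and the arithmetic constraint $\ell\equiv 0$ or $1\pmod d$ (forced by the nearly free hypothesis) matches this to the fixed-point counts case by case. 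The converse direction follows from the same calculation, since the CSP forces $\ell=|X|$ to be compatible with the root-of-unity evaluations.

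Finally, the character identity $\chi^\lambda(c)=f^\lambda(\omega(c))$ follows from Theorem~\ref{thm:Springer}: for any embedding $\omega:C\hookrightarrow\CC^\times$, the value $\omega(c)^{-1}$ is itself a primitive $d$-th root of unity, and by (iii) it is realized as the eigenvalue of $c$ on some eigenvector in $U^{\reg}$; Springer's formula then gives $\chi^\lambda(c)=f^\lambda((\omega(c)^{-1})^{-1})=f^\lambda(\omega(c))$. I expect the main obstacle to be the case analysis in (i) $\Rightarrow$ (ii)---namely, ruling out all mixed cycle types uniformly---but this amounts to elementary linear algebra on each cyclic block.
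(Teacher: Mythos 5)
The paper does not actually supply a proof of this proposition; it is introduced with ``One can also readily check the following'' and a \emph{cf.}\ citation to Springer's \S 5.1. So there is no in-paper argument to compare against, and your direct verification is the natural thing to do. Your treatment of (i)~$\Leftrightarrow$~(ii)~$\Leftrightarrow$~(iii) by block-diagonalizing $c$ along its cycles is correct: a $\zeta$-eigenvector has coordinates forming a geometric progression with ratio $\zeta^{-1}$ on each supported cycle, forcing each supported cycle to have length exactly $\operatorname{ord}(\zeta)$, and an unsupported cycle contributes an all-zero block, so at most one unsupported block is allowed and it must be a fixed point. Likewise your derivation of the final identity $\chi^\lambda(c)=f^\lambda(\omega(c))$ from Theorem~\ref{thm:Springer} is right, and you correctly identify that (iii) is stated so as to guarantee the eigenvalue $\omega(c)^{-1}$ really occurs on a regular eigenvector for \emph{every} choice of embedding $\omega$.

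The one place treated too casually is (iv)~$\Rightarrow$~(ii). This is not just ``the same calculation'' read backwards. From the CSP one must first deduce that $[\ell]_{\omega(c)}=|X^c|$ is a nonnegative integer, which forces $\ell\equiv 0$ or $1\pmod d$ where $d=|c|$ (for $2\le r\le d-1$ one has $[\ell]_\zeta=1+\zeta+\cdots+\zeta^{r-1}$, which is not a nonnegative integer). But even with $\ell\equiv 0,1\pmod d$ granted, there remain non--nearly-free cycle types to rule out, e.g.\ $(d,\ldots,d,1,1,\ldots,1)$ with several fixed points, for which $\ell$ can still be $\equiv 0$ or $1\pmod d$. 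To finish, one should compare $|X^{c^k}|$ with $[\ell]_{\omega(c)^k}$ for all $k$, or equivalently match multiplicities of the $d$ degree-one characters of $C$ in $\CC[X]$ (governed by orbit sizes) against those in $V_{[\ell]_u}$ (a staircase of multiplicities $\lceil\ell/d\rceil$ and $\lfloor\ell/d\rfloor$); this pins down the orbit type to $(d,\ldots,d)$ or $(d,\ldots,d,1)$. The verification is elementary but is genuinely more than re-running the forward direction, and I would spell it out.
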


\begin{proposition}
\label{prop:tensor-power-construction}
Let $(X,X(\uu),\CCC)$ be a triple that exhibits the CSP,
and let $C$ be a cyclic group permuting $[\ell]=\{1,2,\ldots,\ell\}$
nearly freely.  
Let $X^\ell$ be the collection of words
of length $\ell$ in the alphabet,
permuted by $\CCC \times C$ in which $\CCC$ acts on the letter values, and 
$C$ acts on the positions $[\ell]$.

Then 
$$
\left(
\quad X^\ell, 
\quad f[X(\uu)](t),
\quad \CCC \times C
\quad
\right)
$$ 
exhibits the CSP, where $f(\xx)(t):=f(\xx,t)$ is the symmetric
function with cofficients in $\ZZ[t]$ appearing in
\eqref{words-maj-inv}.
\end{proposition}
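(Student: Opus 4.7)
The plan is to apply Proposition~\ref{prop:CSP-as-representations} via Schur-Weyl duality. Set $V := \CC[X]$ with its natural permutation $\CCC$-action. Then the basis $\{e_{w_1} \otimes \cdots \otimes e_{w_\ell}\}_{w \in X^\ell}$ of $V^{\otimes \ell}$ is permuted by $\CCC \times \Symm_\ell$ in exactly the way $\CCC \times \Symm_\ell$ acts on $X^\ell$ (diagonally on letter values, by permutation of positions), so restricting from $\Symm_\ell$ to the subgroup $C$ yields an isomorphism $\CC[X^\ell] \cong V^{\otimes \ell}$ of $\CCC \times C$-permutation representations. Hence it suffices, by Proposition~\ref{prop:CSP-as-representations}, to verify that for every $(\cc, c') \in \CCC \times C$ the trace of $(\cc, c')$ on $V^{\otimes \ell}$ equals the evaluation of $f[X(\uu)](t)$ at $(\uu, t) = (\omega(\cc), \omega(c'))$.

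Applying Schur-Weyl duality \eqref{Schur-Weyl-duality} splits this trace as
\[
\sum_{\lambda \vdash \ell} \operatorname{trace}\bigl(\cc \mid S^\lambda V\bigr) \cdot \chi^\lambda(c').
\]
For the first factor, the hypothesis that $(X, X(\uu), \CCC)$ exhibits the CSP combined with Proposition~\ref{prop:CSP-as-representations} shows that the eigenvalues of $\cc$ on $V$ are precisely the Laurent monomials $\omega(\cc)^\dd$ occurring in $X(\uu)$ with multiplicities $a_\dd$; by the very definition of plethystic substitution, this identifies $\operatorname{trace}(\cc \mid S^\lambda V)$ with $s_\lambda[X(\uu)]$ evaluated at $\uu = \omega(\cc)$. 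For the second factor, the nearly free action of $C$ on $[\ell]$ makes $c'$ a regular element of $\Symm_\ell$ by Proposition~\ref{prop:type-A-regulars}, so Theorem~\ref{thm:Springer} gives $\chi^\lambda(c') = f^\lambda(\omega(c'))$, with the standard $\omega \leftrightarrow \omega^{-1}$ adjustment absorbed into the choice of embedding (as discussed in the appendix).

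What remains is to recognise the resulting sum as $f[X(\uu)](t)$ at the root-of-unity point, which by termwise plethysm reduces to the symmetric-function identity
\[
f(\xx, t) = \sum_{\lambda \vdash \ell} s_\lambda(\xx) \, f^\lambda(t).
\]
This identity follows from RSK: a word $w \in [n]^\ell$ corresponds bijectively to a pair $(P, Q)$ of tableaux of a common shape $\lambda \vdash \ell$, where $P$ is column-strict on $[n]$ satisfying $\xx^P = \xx^w$ (cf.~\eqref{Schur-function-as-tableaux}) and $Q$ is standard satisfying $\maj(Q) = \maj(w)$ (cf.~\eqref{fake-degree-as-tableaux}). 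The main obstacle is this last statistic-preservation property, the classical theorem of Foulkes and Lascoux--Sch\"utzenberger that RSK carries the major index of a word to the major index of its recording tableau; granting this, the proof is a straightforward assembly of Schur-Weyl duality, Springer's regular-element theorem, and Proposition~\ref{prop:CSP-as-representations}.
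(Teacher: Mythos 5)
Your proposal is correct and follows essentially the same route as the paper: identify $\CC[X^\ell]$ with $V^{\otimes\ell}$, apply Schur--Weyl duality to split the trace, use the CSP hypothesis to read off the eigenvalues of $\cc$ on $V$ (so the $\GL$-factor becomes $s_\lambda[X(\uu)]$), invoke Springer's regular-element theorem via Proposition~\ref{prop:type-A-regulars} for the $\chi^\lambda(c')$ factor, and finally recognize $\sum_\lambda s_\lambda(\xx)f^\lambda(t)$ as $f(\xx,t)$ using the major-index-preserving property of RSK. The only small cosmetic difference is that you attribute the $\omega \leftrightarrow \omega^{-1}$ adjustment to the appendix, whereas the paper disposes of it directly through Proposition~\ref{prop:type-A-regulars}(iii), but this is the same underlying fact.
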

\begin{proof}
Let $V=\CC[X]$, and let $n:=|X|=\dim_\CC V$.
One has an isomorphism of 
$\CCC \times C$-representations
$
\CC[X^\ell] \cong 
V^{\otimes \ell},
$
in which $\CCC$ inherits its action by restriction from the diagonal $GL(V)$-action
on $V^{\otimes \ell}$, and $C$ inherits its action by restriction from the
$\Symm_\ell$ permuting the tensor positions in
$V^{\otimes \ell}$.  Thus Schur-Weyl duality \eqref{Schur-Weyl-duality}
implies that for any element $\xx$ in $GL(V)$ 
having eigenvalues $\xx=(x_1,x_2,\ldots,x_n)$, and any 
element $c$ in $C$, the trace of $(\xx,c)$ acting on
$V^{\otimes \ell}$ will be
\begin{equation}
\label{first-trace-expression}
\sum_{\lambda \vdash \ell} s_\lambda(\xx) \chi^\lambda(c).
\end{equation}
Using Theorem~\ref{thm:Springer} and the tableaux expressions 
\eqref{Schur-function-as-tableaux},\eqref{fake-degree-as-tableaux} 
for $s_\lambda(\xx)$ and for $f^\lambda(t)$, one can rewrite 
\eqref{first-trace-expression} as
\begin{equation}
\left[ \sum_{(P,Q)} \xx^P  t^{\maj(Q)} \right]_{t=\omega(c)}
\end{equation}
in which $(P,Q)$ run through all pairs of Young tableaux
of the same shape, with $P$ column-strict and $Q$ standard.  
Well-known properties of the Robinson-Schensted-Knuth
bijection \cite[\SS 7.11, 7.23]{Stanley-EC2} then let one rewrite this trace of $(\xx,c)$ as
$$
\left[ \sum_{w \in X^\ell} \xx^w  t^{\maj(w)} \right]_{t=\omega(c)}
=\left[ f(\xx,t) \right]_{t=\omega(c).}
$$
Given an element $\cc=(c_1,\ldots,c_m)$ in $\CCC$, 
when it is considered as an element of $GL(V)$,
it has exactly $a_\dd$ of its eigenvalues equal to $\prod_i \omega_i(c_i)$.
Hence the discussion of plethysm in Subsection~\ref{subsec:powers} shows 
the trace of $(\cc,c)$ on $V^{\otimes \ell}$ will be 
$
\left[ f(\xx,t)[X(\uu)] \right]_{u_i=\omega_i(c_i), t=\omega(c)}
$
as desired.
\end{proof}

\begin{proof}[Proof of Theorem~\ref{thm:words-biCSP}]
This is immediate from Propositions \ref{prop:type-A-regulars} and
\ref{prop:tensor-power-construction}.   
\end{proof}

\begin{proof}[Proof of Theorem~\ref{thm:finite-field-biCSP}]
As in the statement of the theorem, consider
$X=\FF_{q^\ell}$ with action of $C_1 \times C_2$ where
$C_1=\FF_q^\times$ acts by scalar multiplication and
$C_2=\Gal(\FF_{q^\ell}/\FF_q)$ acts by powers of
the Frobenius endomorphism $F$.

Since $\FF_{q^\ell}/\FF_q$ is a Galois extension, the Normal Basis
Theorem (see, \textit{e.g.}, Lang \cite[Chap. VIII Theorem
13.1]{Lang}) implies that there exists an element $\alpha \in \FF_{n}$
whose Galois images $\{\alpha, F(\alpha), F^2(\alpha), \ldots,
F^{\ell-1}(\alpha)\}$ give an $\FF_q$-basis for $\FF_{q^\ell}$.  This
choice of basis gives an $\FF_q$-vector space isomorphism $\FF_q^\ell
\rightarrow \FF_{q^\ell}$.  Taking $n=q$, one can precompose this with
a bijection $[n]^\ell \rightarrow \FF_q^\ell$ that comes from
numbering the elements of $\FF_q$ by $[n]$.  The composite is a
bijection $[n]^\ell \rightarrow \FF_{q^\ell}$ which is $C_1 \times
C_2$-equivariant, where the $C_1$-action on the letter values $[n]$ is
nearly free, fixing only the value that labels the zero element of
$\FF_q$, and the $C_2$-action freely permutes the positions in the
words $[n]^\ell$ cyclically.
\end{proof}

\section{Parking functions}
\label{sec:parking-functions}
We prove here something somewhat more general than
Theorem~\ref{thm:parking-functions-CSP}, and
then remark on the relation to Theorem~\ref{thm:parking-functions-CSP}.

\begin{proposition}
\label{prop:rearrangements-CSP}
Let $X$ by any collection of words in $[n]^\ell$ 
which is stable under the action
of $\Symm_{\ell}$ permuting positions, and let
$C$ be a cyclic subgroup of $\Symm_{\ell}$ permuting the positions 
$[\ell]$ nearly freely.

Let 
$
X(t)=\sum_{w \in X} t^{\maj(w)}.
$

Then the triple $(X,X(t),C)$ exhibits the CSP.  
\end{proposition}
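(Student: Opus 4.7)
The plan is to apply Proposition~\ref{prop:CSP-as-representations} by exploiting the hypothesis that $X$ is $\Symm_\ell$-stable, which promotes $\CC[X]$ from a mere $C$-permutation module to an $\Symm_\ell$-representation. I would begin by decomposing $X$ into its $\Symm_\ell$-orbits, which are indexed by content vectors $\alpha=(\alpha_1,\ldots,\alpha_n)$ with $\sum_i \alpha_i=\ell$; write $X=\bigsqcup_{\alpha \in I(X)} \mathcal{O}_\alpha$ for some set of contents $I(X)$. The permutation representation on each orbit is $\CC[\mathcal{O}_\alpha] \cong \Ind_{\Symm_\alpha}^{\Symm_\ell} \mathbf{1}$, which by Young's rule decomposes as $\bigoplus_\lambda (\chi^\lambda)^{\oplus K_{\lambda,\alpha}}$, where $K_{\lambda,\alpha}$ is the Kostka number counting column-strict tableaux of shape $\lambda$ and content $\alpha$. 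Summing over orbits yields
\[
\CC[X] \;\cong\; \bigoplus_{\lambda \vdash \ell} (\chi^\lambda)^{\oplus m_\lambda},
\qquad m_\lambda := \sum_{\alpha \in I(X)} K_{\lambda,\alpha}.
\]

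Next, I would match the generating function $X(t)$ to this decomposition using the Robinson--Schensted--Knuth correspondence, which sends $w \in \mathcal{O}_\alpha$ bijectively to pairs $(P,Q)$ of common shape $\lambda$ with $P$ column-strict of content $\alpha$, $Q$ standard, and having the key property $\maj(w)=\maj(Q)$ (see \cite[\SS 7.11, 7.23]{Stanley-EC2}). Partitioning by content and using the tableau formula \eqref{fake-degree-as-tableaux} for $f^\lambda(t)$, this gives
\[
X(t) \;=\; \sum_{\alpha \in I(X)} \sum_{w \in \mathcal{O}_\alpha} t^{\maj(w)}
     \;=\; \sum_{\lambda \vdash \ell} m_\lambda f^\lambda(t).
\]

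Finally, because $C$ acts nearly freely on the positions $[\ell]$, any generator $c$ of $C$ is a regular element of $\Symm_\ell$ by Proposition~\ref{prop:type-A-regulars}, and the last assertion of that proposition supplies $\chi^\lambda(c)=f^\lambda(\omega(c))$ for any embedding $\omega \colon C \hookrightarrow \CC^\times$. Combining with the two displays above,
\[
|X^c| \;=\; \operatorname{tr}_{\CC[X]}(c) \;=\; \sum_\lambda m_\lambda \chi^\lambda(c) \;=\; \sum_\lambda m_\lambda f^\lambda(\omega(c)) \;=\; X(\omega(c)),
\]
so the CSP follows from Proposition~\ref{prop:CSP-as-representations}. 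The argument is essentially routine assembly of tools already in place; the one nontrivial ingredient invoked without proof is the classical preservation of $\maj$ under RSK, which is precisely the hook that lets Springer regularity translate uniformly across all irreducible components of $\CC[X]$.
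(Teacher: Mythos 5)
Your proof is correct, and it takes a genuinely different route from the paper's own argument. The paper reduces immediately to the case where $X$ is a single $\Symm_\ell$-orbit, identifies the permutation module $\CC[\mathcal{O}_\alpha]$ with the action on flags of nested subsets, and then cites an existing CSP result \cite[Proposition 4.4]{RSW} for that flag action (with generating function the $q$-multinomial coefficient), finally invoking MacMahon's identity to rewrite the $q$-multinomial as the major-index generating function over the orbit. You instead stay entirely inside the representation-theoretic framework set up in Section~\ref{sec:constructions}: Young's rule gives the multiplicities $m_\lambda = \sum_\alpha K_{\lambda,\alpha}$ of $\chi^\lambda$ in $\CC[X]$, RSK with descent preservation rewrites $X(t)$ as $\sum_\lambda m_\lambda f^\lambda(t)$, and the regularity of $c$ (Proposition~\ref{prop:type-A-regulars}, via Springer's theorem) converts character values to fake-degree evaluations. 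In effect your argument is the ``positions only'' shadow of the Schur--Weyl argument used in Proposition~\ref{prop:tensor-power-construction}, restricted to a $\Symm_\ell$-stable subset of $[n]^\ell$. The paper's route is shorter because it outsources the hard step to \cite{RSW}; yours is more self-contained, makes the role of Springer regularity explicit, and makes transparent that any $\Symm_\ell$-stable $X$ works because $\CC[X]$ is a genuine $\Symm_\ell$-module. Both are valid proofs.
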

\begin{proof}
It suffices to prove this in the special case
where $X$ is the $\Symm_\ell$-orbit of one word $w$.
If $w$ has $k_i$ occurrences of the letter $i$, then
we claim that the $\Symm_\ell$-action on $X$ is
isomorphic to the $\Symm_\ell$-action on flags of nested subsets
$$
\emptyset \subset S_{k_1} \subset S_{k_1+k_2} \subset
S_{k_1+k_2+k_3} \subset \cdots \subset [\ell]
$$
having cardinalities $k_1,k_1+k_2,k_1+k_2+k_3,\ldots$.
This follows because both such $\Symm_\ell$-actions are transitive,
and have the stabilizer of a typical element conjugate to
the Young subgroup 
$\Symm_{k_1} \times \Symm_{k_2} \times \cdots \times \Symm_{k_n}.$

Hence \cite[Proposition 4.4]{RSW} says
that one has a CSP triple $(X,X(q),C)$
where 
$$
X(q)= \qbin{\ell}{k_1, \ldots,k_n}{q}
$$
is the {\it $q$-multinomial coefficient}.  On the other
hand, MacMahon showed that
\[
\qbin{\ell}{m_1, \ldots,m_n}{q}
 = \sum_{w \in X} q^{\maj(w)}.\qedhere
\]
\end{proof}

\begin{remark}
The case of Theorem~\ref{thm:parking-functions-CSP}
in which the cyclic group $C$ permutes the parking functions nearly freely
while fixing the ${\ell}^{th}$ coordinate, so that $C \subset \Symm_{\ell-1}$,
also follows from Theorem~\ref{thm:words-biCSP} by the following reasoning.

Let $\ZZ_{\ell+1}$ denote the integers mod $\ell+1$.
Consider its $\ell$-fold Cartesian product
$\ZZ_{\ell+1}^{\ell}$ with
$\Symm_\ell$ acting by permuting positions.
This descends to an action of 
$\Symm_\ell$ on the quotient group
$
\ZZ_{\ell+1}^\ell/\ZZ \one
$
where $\ZZ \one$ is the diagonal subgroup
generated by $\one:=(1,1,\ldots,1)$.

There are two well-known collections of coset
representatives for this quotient group:
\begin{enumerate}
\item[$\bullet$]
The subgroup isomorphic to $\ZZ_{\ell+1}^{\ell-1}$
consisting of those elements of $\ZZ_{\ell+1}^{\ell}$
having a zero in the $\ell^{th}$-coordinate.

This gives an $\Symm_{\ell-1}$-equivariant bijection
$\ZZ_{\ell+1}^\ell/\ZZ \one \leftrightarrow \ZZ_{\ell+1}^{\ell-1}$.
\item[$\bullet$]
The set $P_\ell$ of all parking functions of length $\ell$;
see Haiman \cite[Proposition 2.6.1]{Haiman}.  

This gives an $\Symm_{\ell}$-equivariant bijection
$\ZZ_{\ell+1}^\ell/\ZZ \one
\leftrightarrow P_\ell$.
\end{enumerate}
Composing these two bijections gives an
$\Symm_{\ell-1}$-equivariant bijection\
$P_\ell  \leftrightarrow \ZZ_{\ell+1}^{\ell-1}$,
and hence also a $C$-equivariant bijection between these
sets.  

Therefore ignoring the action on the values in
Theorem~\ref{thm:words-biCSP} gives this special case
of Theorem~\ref{thm:parking-functions-CSP}.
\end{remark}

\begin{remark}
  Kung, Sun and Yan \cite{KungSunYan} discuss generalizations of
  parking functions, parametrized by the choice of two non-crossing
  lattice paths. By an appropriate choice of the lattices paths, the
  type $A_{\ell-1}$ parking functions $P_\ell$ discussed above, the
  type $B_\ell$ parking functions of Biane \cite{Biane} and Stanley
  \cite{StanleyParking}, and their ``Fuss'' generalizations
  \cite{ArmstrongEu} are seen to be special cases of these parking
  functions.  For every choice of non-crossings lattice paths, the
  associated parking functions are again collections of words which
  are stable under the action of the symmetric group by permuting
  positions. It follows that Proposition~\ref{prop:rearrangements-CSP}
  applies to each of these collections.
\end{remark}

\section{Hook-length and hook-content formulas}
\label{sec:hook-formulas}

Many of our CSP theorems have expressed the generating functions as
$X(\uu)=\sum_{x \in X} \uu^{\stat(x)}$ for some statistic(s)
$\stat(x)$ on the set $X$.  We point out here how in most of these
results, there is a more compact expression for $X(\uu)$, because it
is the principal specialization of a symmetric function having an {\it
  explicit} expansions in terms of Schur functions $s_\lambda(\xx)$,
or a Schur function multiplied by fake-degree polynomials
$f^\lambda(t)$.  The latter objects are expressed as convenient
products by the hook-content formula \cite[Section~7.21]{Stanley-EC2}
for principally specialized Schur functions, and the hook formula for
the fake-degree polynomials:
$$
\begin{aligned}
s_\lambda(1,u,u^2,\ldots,u^{n-1})
 &=s_\lambda[\,\, [n]_u \,\,] \\
 &= u^{b(\lambda)} \prod_{x \in \lambda} \frac{1-u^{n+c(x)}}{(1-u^{h(x)})}, \\
f^\lambda(t) 
 & = t^{b(\lambda)} \frac{(t;t)_k}{\prod_{x \in \lambda}(1-t^{h(x)}) }, \\
\end{aligned}
$$
where $x$ runs through each of the $k=|\lambda|$ cells of $\lambda$ in each product,
the hooklength $h(x)$ is the number of cells weakly to the right of $x$
plus the number of cells strictly below it, the content $c(x)$ is
$j-i$ if $x$ lies in row $i$ and column $j$, 
$$
\begin{aligned}
b(\lambda) &:=\sum_i (i-1)\lambda_i=\sum_j \binom{\lambda'_j}{2}\\
(z;t)_k&:=(1-z)(1-zt)(1-zt^2) \cdots (1-zt^{k-1}).
\end{aligned}
$$

\subsection{Matrices}
The Cauchy and dual Cauchy identities \cite[Theorems 7.12.1 and 7.14.3]{Stanley-EC2} 
assert that the generating function
for $m \times n$ nonnegative matrices $A=(a_{ij})$ having entries that sum to $k$
$$
\sum_{A =(a_{ij}) \in X} 
\left( \prod_{i,j} (x_i y_j)^{a_{ij}} \right)
=\sum_{\lambda \vdash k} s_\lambda(\xx) s_\lambda(\yy), \\
$$ 
while for $\{0,1\}$-matrices $A=(a_{ij})$ having entries that sum to $k$
$$
\sum_{A =(a_{ij}) \in X} 
\left( \prod_{i,j} (x_i y_j)^{a_{ij}} \right)
=\sum_{\lambda \vdash k} s_\lambda(\xx) s_{\lambda'}(\yy),
$$
where $\lambda'$ denotes the conjugate or transpose partition
to $\lambda$.  
Consequently, the two generating functions $X(u,t)$ appearing
in Theorem~\ref{thm:matrices-biCSP} have these more compact
expressions:
$$
\begin{aligned}
\sum_{\lambda \vdash k} (ut)^{b(\lambda)} 
\prod_{x \in \lambda} \frac{(1-u^{m+c(x)})(1-t^{n+c(x)})}{(1-u^{h(x)})(1-t^{h(x)})}, \\
\sum_{\lambda \vdash k} (ut)^{b(\lambda)} 
\prod_{x \in \lambda} \frac{(1-u^{m+c(x)})(1-t^{n-c(x)})}{(1-u^{h(x)})(1-t^{h(x)})}.
\end{aligned}
$$

\subsection{Words}

The proof of Proposition~\ref{prop:tensor-power-construction}
shows that $X_{n,\ell}(u,t):=X(u,t)$ appearing in
Theorem~\ref{thm:words-biCSP} on words $[n]^\ell$
is the principal $u$-specialization
of the variables $\xx=(x_1,\ldots,x_n)$ in
$$
\sum_{\lambda \vdash \ell} s_\lambda(\xx) f^\lambda(t)
$$
and hence has the more compact expression
\begin{equation}
\label{compact-word-gf}
X_{n,\ell}(u,t) =(t;t)_\ell \sum_{\lambda \vdash \ell} (ut)^{b(\lambda)}
\prod_{x \in \lambda} \frac{(1-u^{n+c(x)})}{(1-u^{h(x)})(1-t^{h(x)})}.
\end{equation}
We remark on how this implies an interesting reciprocity property of these
polynomials when regarded as functions of $n$.  
\begin{proposition} \label{reciprocity-theorem} We have,
$$ 
t^{\binom{\ell}{2}} X_{n,\ell}(u,t^{-1}) = (-u^n)^\ell \left[ X_{n,\ell}(u,t) \right]_{n \mapsto -n}
$$
\end{proposition}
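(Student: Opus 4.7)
The plan is to prove the identity by a direct manipulation of the compact formula \eqref{compact-word-gf}, using three elementary facts about partitions: the total hooklength identity $\sum_{x \in \lambda} h(x) = b(\lambda) + b(\lambda') + \ell$, the content-sum identity $\sum_{x \in \lambda} c(x) = b(\lambda') - b(\lambda)$, and the fact that transposition $\lambda \mapsto \lambda'$ preserves hooklengths while negating contents. Both sides of the claimed reciprocity will be shown to reduce to a common ``symmetric'' sum, namely
$$
(t;t)_\ell \sum_{\lambda \vdash \ell} u^{b(\lambda)} t^{b(\lambda')}
\prod_{x \in \lambda}\frac{1-u^{n+c(x)}}{(1-u^{h(x)})(1-t^{h(x)})}.
$$

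First I would handle the left-hand side $t^{\binom{\ell}{2}} X_{n,\ell}(u,t^{-1})$. Substituting $t \mapsto t^{-1}$ into \eqref{compact-word-gf}, I rewrite $(t^{-1};t^{-1})_\ell = (-1)^\ell t^{-\binom{\ell+1}{2}}(t;t)_\ell$ and each factor $1 - t^{-h(x)} = -t^{-h(x)}(1-t^{h(x)})$. The $(-1)^\ell$ from the Pochhammer cancels the $(-1)^\ell$ coming from the $\ell$ hook factors, while the accumulated power of $t^{-1}$ from the hook factors is $t^{-\sum h(x)} = t^{-b(\lambda) - b(\lambda') - \ell}$. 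Together with the $(ut^{-1})^{b(\lambda)}$ factor, the net exponent of $t$ attached to the $\lambda$-summand becomes $b(\lambda')$. Outside the sum, the leftover $t$-power is $\binom{\ell}{2} - \binom{\ell+1}{2} + \ell = 0$, so the left-hand side is exactly the displayed common sum.

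Next I would handle the right-hand side $(-u^n)^\ell [X_{n,\ell}(u,t)]_{n \mapsto -n}$. Substituting $n \mapsto -n$ and writing $1 - u^{-n+c(x)} = -u^{-n+c(x)}(1-u^{n-c(x)})$ produces a sign $(-1)^\ell$ and a factor $u^{-n\ell + b(\lambda') - b(\lambda)}$ using the content-sum identity. Multiplication by $(-u^n)^\ell$ cancels the $(-1)^\ell$ and the $u^{-n\ell}$. The surviving $u^{b(\lambda')-b(\lambda)}$ combines with the $u^{b(\lambda)}$ from $(ut)^{b(\lambda)}$ to give $u^{b(\lambda')}$, and the factor $t^{b(\lambda)}$ remains. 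Finally, reindexing the sum via $\lambda \mapsto \lambda'$ (permissible because hooklengths are preserved and contents negate, so $\prod_{x \in \lambda}(1-u^{n-c(x)}) = \prod_{x \in \lambda'}(1-u^{n+c(x)})$ and the denominator product is invariant) puts the sum in the identical form as above.

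Since both sides equal the common sum, the reciprocity follows. There is no genuine obstacle here; the only care required is bookkeeping of signs and the exponent cancellation $\binom{\ell+1}{2} - \binom{\ell}{2} = \ell$ on the LHS and the ``sign telescoping'' on the RHS. It is worth noting that the underlying symmetry has a cleaner representation-theoretic shadow: the compact sum is essentially the principal specialization of $\sum_\lambda s_\lambda(\xx) f^\lambda(t)$, and the reciprocity reflects the duality $s_\lambda \leftrightarrow s_{\lambda'}$ under the involution $\omega$ on symmetric functions together with the evaluation symmetry of the fake-degree polynomials $t^{b(\lambda)+b(\lambda')+\ell}f^\lambda(t^{-1}) \cdot (t;t)_\ell^{-1} = f^{\lambda'}(t) \cdot (t;t)_\ell^{-1}$, but the direct calculation sketched above is the most transparent route.
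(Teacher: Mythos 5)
Your proof is correct and matches the paper's in all essentials: both manipulate the compact formula \eqref{compact-word-gf} term by term using the hooklength-sum identity $\sum_{x}h(x)=b(\lambda)+b(\lambda')+|\lambda|$, the content-sum identity $\sum_x c(x)=b(\lambda')-b(\lambda)$, and the fact that conjugation preserves hooklengths while negating contents. The only cosmetic difference is that the paper states the result as the single term-by-term identity $t^{\binom{\ell}{2}}T_{\lambda,\ell}(n,u,t^{-1}) = (-u^n)^\ell T_{\lambda',\ell}(-n,u,t)$ and sums over $\lambda$, whereas you reduce both sides to the common symmetric expression; these are the same computation. One small slip in your closing aside: the fake-degree reciprocity you invoke should read $t^{\binom{\ell}{2}}f^\lambda(t^{-1}) = f^{\lambda'}(t)$, with a prefactor independent of $\lambda$, rather than $t^{b(\lambda)+b(\lambda')+|\lambda|}f^\lambda(t^{-1})=f^{\lambda'}(t)$; this does not affect the main argument.
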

\begin{proof}Define
  $$
  T_{\lambda,\ell}(n,u,t):= (t;t)_\ell (ut)^{b(\lambda)}
  \prod_{x \in \lambda}\frac{1-u^{n+c(x)}}{(1-u^{h(x)})(1-t^{h(x)})},
  $$
so that $X_{n,\ell}(u,t) = \sum_{\lambda \vdash \ell}  T_{\lambda,\ell}(n,u,t).$
One checks that
  $$
  t^{\binom{\ell}{2}} T_{\lambda,\ell}(n,u,t^{-1}) = (-u^n)^\ell
  T_{\lambda',\ell}(-n,u,t)
  $$
due to the following facts:
  \begin{itemize}
  \item $b(\lambda')-b(\lambda) = \sum_{x \in \lambda} c(x)$,
  \item $b(\lambda')+b(\lambda) + |\lambda|= \sum_{x \in \lambda} h(x)$,
  \item $\lambda,\lambda'$ share the same hook lengths, and
  \item the cells $x$ and $x'$ that correspond under conjugation will
    have opposite contents: $c(x) = -c(x')$.\qedhere
  \end{itemize}
\end{proof}

\subsection{Graphs}

Each of the polynomials $X(u)$ appearing in
Theorem~\ref{thm:hypergraphs-CSP}(i)-(iv) is a specialization of a
plethystic composition of symmetric functions of the form $h_k[h_m],
h_k[e_m], e_k[h_m], e_k[e_m]$.  Thus whenever one knows their explicit
expansion into Schur functions, the principal specialization has a
compact expression.

In fact, such plethysm expansions are known when $m=2$ by various
formulas of Littlewood \cite[Chap I, \S8, Exer. 6]{Macdonald-book},
covering all the cases that appear in
Theorem~\ref{thm:graphs-CSP}(i)-(iv).  Similarly one has such plethysm
expansions whenever $k=2$ (see \cite[Chap I, \S8,
Exer. 9]{Macdonald-book}).

\appendix
\section{On the well-definition of the CSP}
\label{appendix}

The data implicit in a CSP is more than just a triple
$(X,X(\uu),\CCC)$ of a finite set $X$ with the permutation action of a
finite abelian group $\CCC$, and a polynomial
$X(\uu):=X(u_1,\ldots,u_m)$ in $\ZZ[\uu]$.  Implicitly, one must also
choose two things:
\begin{itemize}
\item[(a)] 
the decomposition $\CCC:=C_1 \times \cdots \times C_m$, and
\item[(b)]
the embeddings of groups 
$\omega_i:C_i \hookrightarrow \CC^\times$
used to phrase the CSP assertion that
$
|X^\cc| = \left[ X(\uu) \right]_{u_i=\omega_i(c_i)}.
$
\end{itemize}
When $m=1$, so that $\CCC$ is a single cyclic group $C=\ZZ_n$,
then it is easy to see that 
$(X,X(u),C)$ exhibits the CSP for some embedding $\omega : C \to
\CC^\times$ if and only if it exhibits the CSP for any other embedding of
$C$ into $\CC^\times$.  This is because there is always an element of
the Galois group $\Gal(\QQ[e^{\frac{2\pi i}{n}}]/\QQ)$ that takes
one such embedding to another, fixing the polynomial $X(u)$. 

On the other hand, the following example shows that for
non-cyclic abelian groups $\CCC$ the embeddings in (b) can make some
difference.

\begin{example}
  Let $X=\ZZ_3 = \{ 1,\omega, \omega^{-1} \} \subset \CC^\times$ where
  $\omega:=e^{\frac{2 \pi i}{3}}$, and let
  $$
  \CCC=C_1 \times C_2 = \ZZ_3 \times \ZZ_3 \subset \CC^\times \times
  \CC^\times
  $$
  act on $X$ via $(\alpha,\beta) \cdot \gamma = \alpha \beta \gamma$
  where here $\alpha, \beta, \gamma$ are all considered inside
  $\CC^\times$.  Then one can check that with respect to the natural
  inclusions $\omega_1, \omega_2$ of $C_1, C_2$ into $\CC^\times$, the
  polynomial $X(u,t)=1+ut+u^2 t^2$ gives a CSP triple
  $(X,X(u,t),\CCC)$.  However, if one alters the embedding of $C_2$ so
  as to send $\beta \mapsto \beta^{-1}$, this is no longer a CSP
  triple.  On the other hand, it can be fixed if one replaces the
  polynomial $X(u,t)$ with the polynomial $X(u,t^2)$.
\end{example}

\begin{example}
  Let $C$ be a cyclic group such that $C = C_1 \times C_2$, where,
  necessarily, $C_1$ and $C_2$ have relatively prime orders. Suppose
  that $(X,X(u),C)$ exhibits the CSP (for some and, hence, any
  embedding $C \to \CC^\times$). It follows that $(X,X(u,t),C_1 \times
  C_2)$ exhibits the CSP, where $X(u,t) = X(ut)$. Indeed, if
  $\omega_i: C_i \to \CC^\times$ are injections for $i=1,2$ then
  $\omega_1 \omega_2 : C \to \CC^\times$ is an injection. It follows
  that
  \[
  X(\omega_1(c_1), \omega_2(c_2)) = X( \omega_1(c_1)\omega_2(c_2)) =
  |X^{(c_1,c_2)}|,
  \]
  which is to say that $(X,X(u,t),C_1 \times C_2)$ exhibits the CSP.
\end{example}
These two examples suggest how one can account for both choices (a)
and (b) in general, by altering the polynomial $X(\uu)$.  
Suppose one is given two decompositions
$$
\CCC:=C_1 \times \cdots \times C_m=C'_1 \times \cdots \times C'_{m'}
$$
and accompanying embeddings
\begin{align*}
  \omega_i &:C_i \hookrightarrow \CC^\times,\qquad i=1,2,\ldots,m,\\
  \omega'_{j} &:C'_{j} \hookrightarrow \CC^\times, \qquad
  j=1,2,\ldots,m'.
\end{align*}
As mentioned in Section~\ref{sec:constructions}, every degree one
character
\[
C_1 \times \dots \times C_m \rightarrow \CC^\times
\]
can be expressed uniquely in the form $\omega^\dd=\omega_1^{d_1}
\cdots \omega_m^{d_m}$ with $0 \leq d_i < |C_i|$ for
$i=1,2,\ldots,m$. It follows that the composite characters defined for $1 \leq j \leq m'$
by
\[
C_1 \times \dots \times C_m = \CCC = C'_1 \times \dots \times C'_{m'}
\stackrel{\pi'_j}{\to} C_j' \stackrel{\omega'_j}{\to} \CC^\times,
\]
where $\pi'_j$ is the projection map, can each be written as
${\omega}^{\dd^{(j)}}$ for some $\dd^{(j)}$.
Given variables $\uu = (u_1,\dots,u_{m})$ we set
\[
\uu^{\dd^{(j)}} = u_1^{d^{(j)}_1} u_2^{d^{(j)}_2} \cdots
u_{m}^{d^{(j)}_{m}}.
\]
One can now check that
\[
(X,\,\, X(v_1,v_2,\dots,v_{m'}), \,\, C_1' \times \dots \times C_{m'}')
\]
exhibits the CSP, with respect to the embeddings $\{\omega_j'\}_{j =
  1,2,\dots, m'}$ if and only if
\[
(X,\,\, X(\uu^{\dd^{(1)}}, \uu^{\dd^{(2)}} , \dots ,\uu^{\dd^{(m')}} ) , \,\,
C_1 \times \dots \times C_m)
\]
exhibits the CSP with respect to to the embeddings
$\{\omega_i\}_{i=1,2,\dots,m}$.

Reversing the roles of $\{\omega_i\}_{i=1,2,\ldots,m}$ and
$\{\omega'_i\}_{i=1,2,\ldots,m'}$, one sees that one can pass between
the polynomials relevant for any two CSPs for $X$ and $\CCC$ by a
simple monomial change-of-variables.

\section*{Acknowledgements}
The authors thank Dennis Stanton for helpful conversations regarding
this material. They also thank the referees for a careful reading of
the manuscript. 


\begin{thebibliography}{70}

\bibitem{ArmstrongEu} 
D. Armstrong, S.-P. Eu, 
Nonhomogeneous parking functions and noncrossing partitions,
{\it Electron. J. Combin.} {\bf 15} (2008), \#R146

\bibitem{BarceloReinerStanton}
H. Barcelo, V. Reiner and D. Stanton,
Bimahonian distributions. {\it J. London Math. Soc.} {\bf 77} (2008), 627-646.

\bibitem{Biane}
P. Biane, 
Parking functions of types A and B, {\it Electron. J. Combin.} {\bf
  9(1)} (2002), N7.

\bibitem{FoataSchutzenberger}
D. Foata and M.-P. Sch\"utzenberger,
Major index and inversion number of permutations. {\it Math. Nachr.} {\bf 83}  (1978), 143--159.

\bibitem{Haiman}
M.~D. Haiman, 
Conjectures on the quotient ring by diagonal invariants.  
{\it J. Algebraic Combin.}  {\bf 3}  (1994),  no. 1, 17--76. 


\bibitem{KungSunYan}
J.~P.~S. Kung, X. Sun, and C. Yan, 
Two-boundary lattice paths and parking functions,
{\it Adv. in Appl. Math.} {\bf 39} (2007).

\bibitem{Lang}
S. Lang, 
Algebra, {\it Graduate Texts in Mathematics} {\bf 211}.
Springer-Verlag, New York, 2002.

\bibitem{Macdonald-book}
I.~G. Macdonald,
Symmetric functions and Hall polynomials. Second edition. 
Oxford Mathematical Monographs. Oxford Science Publications. 
The Clarendon Press, Oxford University Press, New York, 1995. 

\bibitem{MoritaNakajima}
H. Morita and T. Nakajima,
A formula of Lascoux-Leclerc-Thibon and representations of
the symmetric groups.
{\it J. Algebraic Combin.} {\bf 24} (2006), 45-60. 

\bibitem{PakPostnikov} I.~Pak, A.~Postnikov, Enumeration of trees and
  one amazing representation of $S_n$ (extended abstract). Proceedings
  of FPSAC'96.

\bibitem{Rhoades}
B. Rhoades,
Hall-Littlewood polynomials and fixed point enumeration.
{\it Disc. Math.} {\bf 310} (2010), 869-876.

\bibitem{RSW}
V.~Reiner, D.~Stanton, and D.~White,
The cyclic sieving phenomenon.
{\it J. Combin. Theory Ser. A} {\bf 108}  (2004), 17--50.

\bibitem{Springer}
T.~A. Springer,
Regular elements of finite reflection groups.
{\it Invent. Math.} {\bf 25} (1974), 159--198.


\bibitem{Stanley-EC1} 
R.~P.~Stanley, 
Enumerative Combinatorics, Volume 1.
{\it Cambridge Studies in Advanced Mathematics} {\bf 49}. 
Cambridge University Press, Cambridge, 1997.

\bibitem{Stanley-EC2} 
R.~P.~Stanley, 
Enumerative Combinatorics, Volume 2.
{\it Cambridge Studies in Advanced Mathematics} {\bf 62}. 
Cambridge University Press, Cambridge, 1999.

\bibitem{Stanley-invariants}
R.~P.~Stanley,
Invariants of finite groups and their applications to combinatorics.
{\it Bull. Amer. Math. Soc. (N.S.)} {\bf 1} (1979), 475--511.

\bibitem{StanleyParking}
R.~P.~Stanley,
Parking functions and noncrossing partitions, {\it Electron. J. Combin.} {\bf 4
(2)} (1997), R20.

\bibitem{Stembridge}
J.~Stembridge,
Some hidden relations involving the ten symmetry classes of plane partitions.  
{\it J. Combin. Theory Ser. A}  {\bf 68}  (1994),  372--409. 

\bibitem{Westbury} 
B.~W.~Westbury, 
Invariant tensors and the cyclic sieving phenomenon.
\href{http://arxiv.org/abs/0912.1512}{\texttt{arXiv:0912.1512}}
  (2009).

\end{thebibliography}
\end{document}